\title{On the spectrum of residual finiteness growth functions}
\author{Henry Bradford}
\newtheorem{thm}{Theorem}[section]
\newtheorem{lem}[thm]{Lemma}
\newtheorem{propn}[thm]{Proposition}
\newtheorem{coroll}[thm]{Corollary}
\newtheorem{defn}[thm]{Definition}
\newtheorem{ex}[thm]{Example}
\newtheorem{rmrk}[thm]{Remark}
\newtheorem{qu}[thm]{Question}
\DeclareMathOperator{\Alt}{Alt}
\DeclareMathOperator{\PSL}{PSL}
\DeclareMathOperator{\SL}{SL}
\DeclareMathOperator{\Sym}{Sym}
\begin{document}

\maketitle

\begin{abstract}
In \cite{BoRaSewa} Bou-Rabee and Seward 
constructed examples of 
finitely generated residually finite groups $G$
whose residual finiteness growth function 
$\mathcal{F}_G$ can 
be at least as fast as any prescribed function. 
In this note we describe a modified version of 
their construction, which allows us to 
give a complementary upper bound on $\mathcal{F}_G$. 
As such, every nondecreasing function at least 
$\exp ( n \log (n)^2 \log \log (n)^{1+\epsilon} )$ is close to the residual finiteness 
growth function of some finitely generated group. 
We also have similar result for the 
\emph{full} residual finiteness growth function 
and for the divisibility function. 
\end{abstract}

\section{Introduction}

A group $G$ is \emph{residually finite} if 
every nontrivial element can be distinguished 
from the identity in some finite quotient of $G$. 
In \cite{BouRab} Bou-Rabee pioneered the 
study of an effective version of residual finiteness, 
valid when $G$ is generated by a finite set $S$, 
and measured by the 
\emph{residual finiteness growth function} 
$\mathcal{F}_G ^S : 
\mathbb{N} \rightarrow \mathbb{N}$ 
of $G$. 
The project of estimating $\mathcal{F}_G ^S$ 
for groups $G$ of particular interest 
has been taken up by many authors; 
see \cite{BoRaCheTim} Table 1 
for a summary of some of the known estimates. 
One may also pose the ``inverse'' problem 
and ask which functions can arise as the 
residual finiteness growth of some group $G$. 
For instance, one might ask whether 
there is a universal upper bound on the 
rate at which $\mathcal{F}_G ^S$ can grow. 
Bou-Rabee and Seward \cite{BoRaSewa} 
answer this last question in the negative: 
they construct, for any increasing function $F$, 
a finitely generated residually finite group $G$ 
such that $\mathcal{F}_G ^S$ dominates $F$.  
Our goal is to strengthen their result, 
by showing that for all sufficiently quickly growing $F$, we can arrange that 
$\mathcal{F}_G ^S$ is \emph{equivalent} to 
$F$, in an appropriate sense. 

\begin{thm}[Theorem \ref{MainThmRFGrestated}] \label{MainThmRFG}
Let $F : \mathbb{N} \rightarrow \mathbb{N}$ 
be a nondecreasing function such that: 
\begin{itemize}
\item[(a)] There exist $c ,\epsilon > 0$ 
such that for all $n \in \mathbb{N}$, 
\begin{equation*}
F(n) \geq \exp \big( c n \log (n)^2 \log \log (n)^{1+\epsilon} \big); 
\end{equation*}

\item[(b)] There exist $C_1, C_2 > 1$ 
such that for all $n \in \mathbb{N}$, 
\begin{equation*}
F(n)^{C_1} \leq F(C_2 n). 
\end{equation*}
\end{itemize}

Then there exists a residually finite group 
$G$, generated be a finite set $S$, 
and $C_1 ',C_2 ' >0$ such that for all $n \in \mathbb{N}$, 
\begin{equation} \label{MainThmConclIneq}
F(C_1'n) \leq \mathcal{F}_G ^S (n) \leq F(C_2'n). 
\end{equation}
\end{thm}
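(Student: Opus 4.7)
The plan is to adapt and refine the Bou-Rabee--Seward construction from \cite{BoRaSewa}. In their construction, $G$ is a finitely generated subgroup of $\prod_{i \geq 1} \Alt(k_i)$ with the degrees $k_i$ tuned to a target function $F$; I would modify the scheme so that the residual finiteness growth is pinched from both sides rather than only bounded below.

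Concretely, I would choose the sequence $k_1 < k_2 < \ldots$ greedily so that $\log (k_i !) \asymp \log F(\ell_i)$, where $\ell_i$ is the minimal word length, in a fixed finite generating set $S$ of $G$, of an element with nontrivial projection to $\Alt(k_i)$. The lower bound $F(C_1'n) \leq \mathcal{F}_G^S(n)$ then follows essentially as in \cite{BoRaSewa}: an element of $G$ whose first nontrivial projection is to $\Alt(k_i)$ can only be detected in a quotient of order at least $k_i! / 2 \gtrsim F(\ell_i)$, because $\Alt(k_i)$ is nonabelian simple. The doubling condition (b) interpolates this inequality between consecutive $\ell_i$ to every $n$.

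The new and harder half is the upper bound $\mathcal{F}_G^S(n) \leq F(C_2'n)$. For this I require that the chosen generators act \emph{slowly enough} on each factor $\Alt(k_i)$: every word of length less than $\ell_i$ should project trivially to the $i$-th coordinate. Granted this, any nontrivial $g$ of length $n$ is already detected in the finite quotient $\prod_{j \leq i(n)} \Alt(k_j)$, where $i(n) = \max \{ i : \ell_i \leq n \}$, and this quotient has order at most $\prod_{j \leq i(n)} k_j ! \lesssim (k_{i(n)}!)^2 \lesssim F(\ell_{i(n)})^2 \leq F(C_2'n)$, where the last step uses (b).

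The main obstacle is the joint realisation of two competing requirements: the generating set $S$ must remain finite as $i$ ranges over $\mathbb{N}$, yet the $i$-th factor $\Alt(k_i)$ should be accessible only through words of length exactly $\ell_i$ prescribed by $F$. I would achieve this by embedding $G$ in an iterated wreath-product or branch-type scaffolding, in which the factors $\Alt(k_i)$ appear at successive levels and are separated below by commutator depth. The precise threshold $\exp\!\big( c n \log(n)^2 \log \log (n)^{1+\epsilon} \big)$ in (a) emerges as the minimal diameter achievable for $\Alt(k)$ in a bounded generating set (up to the $\log\log$ factor), and is the true obstruction preventing slower growth rates from being captured by this framework. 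The analogous statement for the full residual finiteness growth function, alluded to in the abstract, should then follow from the same construction with only minor bookkeeping changes to account for balls rather than single elements.
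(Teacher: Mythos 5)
Your high-level strategy is correct and matches the paper in spirit: adapt the Bou-Rabee--Seward construction of a finitely generated subgroup of $\prod_i \Alt(k_i)$, use simplicity of the alternating factors for the lower bound, and engineer the generators so that short words only ``see'' a bounded initial segment of factors for the upper bound. You also correctly identify that condition (b) is used to interpolate the two-sided bound between consecutive detection lengths. But the proposed \emph{implementation} diverges from the paper's and has a genuine gap.

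The paper does not use an iterated wreath product or branch-type scaffolding separated by commutator depth. It stays with a plain direct product $\prod_n \Alt(d(n))$, taking $\alpha_n$ a $d(n)$-cycle and $\beta_n$ a $3$-cycle whose support is spread by $r(n) \approx n$, with the sequence $r(n)$ chosen by a greedy number-theoretic argument (Proposition \ref{BuildingrProp}) to be arithmetically independent modulo the primes $d(m)$. Two consequences drive the proof. First, $[\beta,\alpha^{r(m)}\beta\alpha^{-r(m)}]$ is a word of length $O(m)$ supported in the single coordinate $m$, which (together with simplicity of $\Alt(d(m))$ and the nontrivial fact that the $m$-th coordinate copy $T_m$ is actually a subgroup of $G$, Corollary \ref{SimpleSubLem}) gives the lower bound. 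Second, for $m$ large relative to $n$, the ball of radius $n$ in $\langle\alpha_m,\beta_m\rangle$ is isometric to the ball in $C_3 \wr \mathbb{Z}$ (Proposition \ref{BigFactorsProp}); so a nontrivial word of length $\leq n$ is detected either by a small coordinate or, if nontrivial in the lamplighter group, by \emph{every} coordinate beyond a linear threshold -- in which case the first such coordinate already gives a small quotient. Note this is not the condition you stated: it is \emph{false} in the paper's group that ``every word of length $< \ell_i$ projects trivially to the $i$-th coordinate.'' Words that are nontrivial in the lamplighter group project \emph{nontrivially} to all large coordinates; the upper bound works anyway because those projections are coherent with a fixed infinite group, so the smallest of them suffices. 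Your stated local-triviality condition would be strictly harder to realize, and you give no construction achieving it. Moreover, the threshold in (a) does not come from the diameter of $\Alt(k)$; it comes from the requirement $\sum_m 1/d(m) < \infty$ in Proposition \ref{BuildingrProp}, needed to produce the arithmetically independent spreads $r(n)$ via a greedy/Borel--Cantelli count. In short: right skeleton, but the two load-bearing ideas (the local-lamplighter phenomenon and the arithmetic independence of spreads) are missing, and the proposed branch-group replacement is speculative and unsubstantiated.
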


In the terminology of \cite{DeLaHar}, 
inequalities of the form (\ref{MainThmConclIneq}) 
constitute a \emph{strong equivalence} 
between the functions $F$ and $\mathcal{F}_G ^S$. 
Absent a condition of the form (b) 
on $F$, we can still show an equivalence 
between the growth of $\mathcal{F}_{G}$ 
and $F$, in  a slightly weaker sense 
(see Theorem \ref{MainTechRFG} below 
for the general statement). 



There are many equivalent ways to define 
residual finiteness of a group, 
some of which lead to distinct growth functions. 
Besides $\mathcal{F}_G ^S$ , 
we study the \emph{full residual finiteness growth 
function} (also called \emph{residual girth}) 
$\mathcal{R}_G ^S$ introduced by Bou-Rabee and 
McReynolds \cite{BoRaMcR} 
and the \emph{divisibility function} 
$\mathcal{D}_G ^S$ 
(also called the \emph{non-normal residual finiteness growth function}). 
For any finitely generated group $G$, 
we have $\mathcal{D}_G ^S (n) \leq \mathcal{F}_G ^S (n) \leq \mathcal{R}_G ^S (n)$. 
For a particular $G$, 
it might happen that 
$\mathcal{F}_G ^S$ and $\mathcal{R}_G ^S$ are equivalent, 
or that $\mathcal{R}_G ^S$ grows much faster 
than $\mathcal{F}_G ^S$ 
(and similarly for the inequality between 
$\mathcal{D}_G ^S$ and $\mathcal{F}_G ^S$). 
To take one example, $\mathcal{F}_G ^S$ can grow 
polynomially, while $\mathcal{R}_G ^S$ 
grows exponentially, as in the case $G = \SL_d (\mathbb{Z})$, 
for $d \geq 3$. 
In our case, the same construction that 
yields Theorem \ref{MainThmRFG} 
also allows us to prove the following. 

\begin{thm}[Theorem \ref{MainThmFullRFGrestated}] \label{MainThmFullRFG}
Let $F : \mathbb{N} \rightarrow \mathbb{N}$ 
be a nondecreasing function 
satisfying: 
\begin{itemize}
\item[(b')] There exists $C > 1$ 
such that for all $n \in \mathbb{N}$, 
$F(n)^n \leq F(Cn)$. 
\end{itemize}
Then there exists a residually finite group 
$G$, generated be a finite set $S$, 
and $C_1 ',C_2 ' >0$ 
such that for all $n \in \mathbb{N}$, 
\begin{equation} 
F(C_1'n)  \leq \mathcal{F}_G ^S (n) \leq \mathcal{R}_G ^S (n) \leq F(C_2'n). 
\end{equation}
\end{thm}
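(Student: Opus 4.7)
The plan is to reduce Theorem \ref{MainThmFullRFG} to Theorem \ref{MainThmRFG}, using the same group $G$ and converting the upper bound on $\mathcal{F}_G^S$ into one on $\mathcal{R}_G^S$ by a standard separation argument, with the strength of (b') used to absorb the resulting combinatorial factor. First I would verify that (b') implies both (a) and (b). Iterating (b') gives $F(C^k n) \geq F(n)^{n^k}$ for every $k \geq 1$, which immediately yields (b) for any fixed $C_1 \leq n$, and (provided $F$ eventually takes values $\geq 2$, which is necessary for the conclusion to be meaningful) forces $F$ to dominate every tower of exponentials, hence fulfils (a) for all sufficiently large $n$; the finitely many small values can be absorbed into $C_1'$. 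Theorem \ref{MainThmRFG} then provides a finitely generated group $G = \langle S \rangle$ with
\begin{equation*}
F(C_1' n) \leq \mathcal{F}_G^S(n) \leq F(C_2' n).
\end{equation*}

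The middle inequality $\mathcal{F}_G^S \leq \mathcal{R}_G^S$ in the target chain is tautological, and the leftmost inequality is inherited from the display above. For the rightmost inequality, I would invoke the standard bound
\begin{equation*}
\mathcal{R}_G^S(n) \leq \mathcal{F}_G^S(2n)^{|B_S(2n)|},
\end{equation*}
obtained by intersecting, for each non-identity $g \in B_S(2n)$, a normal witness $N_g \trianglelefteq G$ of index at most $\mathcal{F}_G^S(2n)$ with $g \notin N_g$. Substituting the upper bound from Theorem \ref{MainThmRFG} gives $\mathcal{R}_G^S(n) \leq F(2C_2'n)^{|B_S(2n)|}$. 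Assuming $G$ has polynomial word growth $|B_S(2n)| \leq (2n)^d$, taking $m = 2C_2' n$ and $k = d+1$ in the iterate $F(m)^{m^k} \leq F(C^k m)$ yields $\mathcal{R}_G^S(n) \leq F(C^{d+1} \cdot 2C_2' n)$, as required.

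The main obstacle is therefore to ensure that the group $G$ produced by the construction underlying Theorem \ref{MainThmRFG} has polynomial word growth. The absorption argument is sharp here: an exponential ball size $|B_S(n)| \leq K^n$ would force the iterate of (b') to be taken with $k$ of order $n/\log n$, producing an argument for $F$ of order $C^{n/\log n} n$, which cannot be absorbed into a linear rescaling. I would expect to establish polynomial growth by inspection of the modified Bou-Rabee--Seward construction, exploiting the fact that the rapid growth of $\mathcal{F}_G^S$ there is driven by the choice of large finite building-block quotients rather than by any intrinsic exponential word growth of $G$ itself; if necessary, the construction can be further tweaked to guarantee polynomial growth without disturbing the estimates on $\mathcal{F}_G^S$.
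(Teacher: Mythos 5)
There is a genuine gap, and it is fatal to the proposed reduction. You derive the upper bound on $\mathcal{R}_G^S(n)$ from the crude inequality $\mathcal{R}_G^S(n) \leq \mathcal{F}_G^S(2n)^{|B_S(2n)|}$ and then hope to absorb the exponent $|B_S(2n)|$ using (b'), which requires $|B_S(n)|$ to grow polynomially. But the group $G = B(d,r,r)$ from Theorem \ref{MainThmRFG} cannot have polynomial word growth, and no ``tweak'' can rescue this: by Gromov's theorem, polynomial word growth forces $G$ to be virtually nilpotent, and virtually nilpotent groups have polynomially bounded residual finiteness growth, which is incompatible with the lower bound $F(C_1'n) \leq \mathcal{F}_G^S(n)$ for any $F$ satisfying hypothesis (a). In fact one can see directly that the specific $G$ in the paper has exponential word growth: Proposition \ref{BigFactorsProp} shows that a word $w$ of length at most $n$ dies in $G$ only if it dies in $W = C_3 \wr \mathbb{Z}$, so $G$ surjects onto $W$, a group of exponential growth. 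Your own sanity check (``an exponential ball size would force the iterate of (b') to be taken with $k$ of order $n/\log n$, producing an argument for $F$ of order $C^{n/\log n}n$'') is exactly right about why the plan then fails; what is wrong is the expectation that the exponential case can be avoided.

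The paper sidesteps this entirely by \emph{not} using the generic inequality (\ref{TrivialRFGrowthIneq}). Instead, Proposition \ref{FullUBProp} produces, by hand, a quotient map $\rho_n : G \to \prod_{k=1}^{2n}\Alt(d(k))$ which is injective on $B_S(n)$: the injectivity follows from Proposition \ref{BigFactorsProp}, which guarantees that any word of length at most $2n$ that is nontrivial in $G$ is already nontrivial in one of the first $n$ alternating factors. This gives $\mathcal{R}_G^S(n) \leq \frac{1}{2^{2n}}\prod_{k=1}^{2n} d(k)! \leq (2nf(n))!$, an upper bound whose logarithm is of order $n \log F(n)$ rather than $|B_S(n)| \log F(n)$ — that is, linear in $n$ rather than exponential — and this linear factor is exactly what condition (b') is calibrated to absorb. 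If you want to salvage your argument, the missing ingredient is precisely this structural fact about $G$: a single product of $O(n)$ finite factors already separates $B_S(n)$. Without that, the ``standard separation argument'' is far too lossy.
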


\begin{thm}  \label{MainThmDiv}
Let $f : \mathbb{N} \rightarrow \mathbb{N}$ 
be a nondecreasing function such that: 
\begin{itemize}
\item[(a'')] There exist $c ,\epsilon > 0$ 
such that for all $n \in \mathbb{N}$, 
\begin{equation*}
f(n) \geq c n \log (n) \log \log (n)^{1+\epsilon}; 
\end{equation*}

\item[(b'')] There exists $C_1 , C_2 > 1$ 
such that for all $n \in \mathbb{N}$, 
$C_1 f(n) \leq f(C_2 n)$. 
\end{itemize}
Then there exists a residually finite group 
$G$, generated be a finite set $S$, 
and $C_1 '',C_2 '' >0$ 
such that for all $n \in \mathbb{N}$, 
\begin{equation} 
f(C_1''n)  \leq \mathcal{D}_G ^S (n) \leq f(C_2''n). 
\end{equation}
\end{thm}

Again, we also have results under conditions 
weaker than (b') or (b''); 
see Theorems \ref{MainTechFRFG} and \ref{MainTechDiv} 
for the most general statements. 
Note that condition (b') is stronger 
than condition (b) of Theorem \ref{MainThmRFG}. 
Moreover, an easy induction shows that condition (b') 
implies a bound $F(n) \geq \exp(c n^{\log(n)})$, 
so that a function satisfying (b') 
also satisfies condition (a) of Theorem \ref{MainThmRFG}. 
Our methods are not applicable to functions 
$F$ growing too slowly. 
For instance it remains a beguiling open question 
whether there exists a finitely 
generated residually finite group for which 
$\mathcal{F}_G ^S$ or $\mathcal{R}_G ^S$ 
is superpolynomial but subexponential. 

\subsection*{Notation}

If $f : \mathbb{N} \rightarrow \mathbb{N}$ 
is a function and $x \in (0,\infty)$ 
is not an integer, 
then by $f(x)$ we shall mean $f(\lceil x \rceil)$. 
For $G$ a group and $g,h \in G$, 
let $[g,h] = g^{-1} h^{-1} gh$ denote 
the commutator of $g$ and $h$ in $G$. 
By $F(x,y)$ we shall denote the free group 
of rank two with free basis $\lbrace x,y \rbrace$. 
For $w \in F(x,y)$ a reduced word, 
$G$ a group and $g,h \in G$, 
we shall denote by $w(g,h)$ the image of $w$ under the 
(unique) homomorphism $F(x,y) \rightarrow G$ 
sending $x,y$ to $g,h$, respectively. 
In this paper all group actions are on the left. 

\section{Preliminaries}

\subsection{Functional inequalities}

\begin{defn}
Given nondecreasing 
functions $f,g:\mathbb{N}\rightarrow\mathbb{N}$, 
we say that $f$ \emph{strongly dominates} $g$ 
if there exists $C>0$ such that for 
all $n \in \mathbb{N}$, 
$g(n) \leq f(Cn)$. 
We say that $f$ and $g$ are \emph{strongly equivalent} 
and write $f \approx^s g$ if each strongly 
dominates the other. 
\end{defn}

\begin{rmrk} \label{SuffLargeRmrk}
If $f$ is unbounded, 
then to prove that $f$ strongly dominates $g$, 
it suffices to verify an inequality 
of the form $g(n) \leq f(Cn)$ for all $n$ 
sufficiently large. 
\end{rmrk}

\begin{lem} \label{FactorialInvLem}
Let $F : \mathbb{N} \rightarrow \mathbb{N}$ 
be a nondecreasing function satisfying 
hypothesis (a) of Theorem \ref{MainThmRFG} 
(with respect to $c,\epsilon > 0$). 
Given $C_1 > 0$, there exists $C_2 > 0$ 
such that the function $f(n) = \lceil \log F(n+C_2) / \log\log F(n+C_2) \rceil$ satisfies: 
\begin{equation}
f(n) \geq C_1 n \log (n) \big( \log \log(n) \big)^{1+\frac{\epsilon}{2}} + C_1
\end{equation}
for all $n \in \mathbb{N}$. 
\end{lem}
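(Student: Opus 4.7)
The plan is to estimate $f(n)$ from below by invoking hypothesis (a) and then performing routine asymptotic bookkeeping, the one substantive observation being that the map $h(x) = x / \log x$ is monotonically increasing for $x \geq e$. Write $G(n) := c n \log(n)^2 \log\log(n)^{1+\epsilon}$, so that (a) reads $\log F(n) \geq G(n)$. Since $G(n) \geq e$ once $n$ exceeds some absolute constant, monotonicity of $h$ gives
\begin{equation*}
f(n) \;\geq\; \frac{\log F(n+C_2)}{\log\log F(n+C_2)} \;=\; h\big(\log F(n+C_2)\big) \;\geq\; h\big(G(n+C_2)\big) \;=\; \frac{G(n+C_2)}{\log G(n+C_2)}.
\end{equation*}

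Next I estimate the denominator: expanding
\begin{equation*}
\log G(n) \;=\; \log c + \log n + 2 \log\log n + (1+\epsilon)\log\log\log n,
\end{equation*}
one sees that $\log G(n) \leq 2 \log n$ for $n$ larger than some absolute $N_0$. Substituting, I obtain
\begin{equation*}
\frac{G(n)}{\log G(n)} \;\geq\; \frac{c}{2}\, n \log(n)\, \log\log(n)^{1+\epsilon} \qquad (n \geq N_0).
\end{equation*}

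Finally, I compare this lower bound with the target $C_1 n \log(n) (\log\log n)^{1+\epsilon/2} + C_1$. The ratio of the leading terms is $(c/(2C_1))\,(\log\log n)^{\epsilon/2}$, which tends to infinity; hence for all $n$ larger than some $N_1$ depending on $c, \epsilon, C_1$, we have
\begin{equation*}
\tfrac{c}{2}\, n \log(n)\, \log\log(n)^{1+\epsilon} \;\geq\; C_1 n \log(n)\, \log\log(n)^{1+\epsilon/2} + C_1.
\end{equation*}
Taking $C_2$ to be $\max(N_0, N_1)$ (enlarged if necessary to avoid the boundary cases $n = 0, 1$ where $\log n$ or $\log\log n$ is zero or undefined), and applying the chain of inequalities above with $n + C_2$ in place of $n$, yields the desired bound on $f(n)$ for every $n \in \mathbb{N}$.

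There is no real obstacle here: the entire argument is a careful asymptotic estimate. The one structurally important choice, built into the statement of the lemma, is the exponent margin between $(1+\epsilon)$ in hypothesis (a) and $(1+\epsilon/2)$ in the conclusion, which is exactly what allows multiplicative constants to be absorbed in the last step. The monotonicity of $x/\log x$ is what licenses replacing the unknown function $F$ by its explicit lower bound $\exp(G)$ inside the ratio $\log F / \log\log F$.
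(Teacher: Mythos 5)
Your proof is correct and follows essentially the same route as the paper's: both hinge on the monotonicity of $x/\log x$, substitute the explicit lower bound from hypothesis (a), estimate $\log G(n) \leq 2\log n$, and then use the $(\log\log n)^{\epsilon/2}$ margin to absorb constants for large $n$, absorbing small $n$ by the shift $C_2$. The only thing left slightly implicit in your last sentence is that the target $n\log(n)(\log\log n)^{1+\epsilon/2}$ is itself nondecreasing (which is what lets you replace $n+C_2$ by $n$ on the right-hand side), a point the paper spells out explicitly.
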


\begin{proof}
Since $\log(x)/\log\log(x)$ is nondecreasing 
for $x$ sufficiently large, we have for all 
$n \in \mathbb{N}$ sufficiently large 
(depending on $\epsilon$, $c$ and $C_1$): 
\begin{align*}
\frac{\log F(n)}{\log \log F(n)} & \geq 
\frac{c n \log (n)^2 \log \log (n)^{1+\epsilon}}{\log (n) + 2 \log\log (n)+(1+\epsilon)\log\log\log(n) + \log(c)} \\
& \geq c n \log (n)^2 \log \log (n)^{1+\epsilon}/2 \\
& \geq C_1 n \log (n) \log \log(n) ^{1+\frac{\epsilon}{2}} + C_1. 
\end{align*}
The required conclusion follows for any $C_2$ 
sufficiently large, 
since $n \log (n) \log \log(n) ^{1+\frac{\epsilon}{2}}$ 
is nondecreasing. 
\end{proof}

\begin{lem} \label{FactorialBdLem}
Let $G : \mathbb{N} \rightarrow \mathbb{N}$ 
be nondecreasing unbounded and let $K \in \mathbb{N}$. 
Let $g(n) = \lceil \log G(n) / \log\log G(n) \rceil$. 
Then: 
\begin{itemize}
\item[(a)] $\log((K g(n))!) = K \log G(n) 
+ O_K \big( \frac{\log G(n)\log\log\log G(n)}{\log\log G(n)} \big)$

\item[(b)]  $\log((K n g(n))!) \leq K n \log G(n) 
+ O_K \big( \frac{n \log G(n) \log(n)}{\log\log G(n)} \big)$

\end{itemize}
\end{lem}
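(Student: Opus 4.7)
The plan is to invoke Stirling's approximation in the form $\log(m!) = m \log m - m + O(\log m)$ and then expand carefully, keeping track of which error terms dominate. The key preliminary estimate is that, because of the ceiling, we have $g(n) \log \log G(n) = \log G(n) + O(\log \log G(n))$, and also $\log g(n) = \log \log G(n) - \log \log \log G(n) + O(1)$ once $G(n)$ is large enough that $g(n) \approx \log G(n)/\log \log G(n)$. These two identities govern both parts.

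For part (a), I would set $m = K g(n)$. Then $\log m = \log K + \log g(n) = \log \log G(n) - \log \log \log G(n) + O_K(1)$. Multiplying by $m = K g(n)$ gives
\begin{equation*}
m \log m = K g(n) \log \log G(n) - K g(n) \log \log \log G(n) + O_K(g(n)).
\end{equation*}
The first summand equals $K \log G(n) + O_K(\log \log G(n))$ by the ceiling estimate; the second is $O_K\big(\log G(n) \log \log \log G(n)/\log \log G(n)\big)$, which is exactly the error term claimed. The remaining $-m + O(\log m)$ contributes $O_K(g(n)) + O(\log \log G(n))$, which is dominated by the error term already present. Collecting these yields (a).

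For part (b), I would set $m = K n g(n)$ and again apply Stirling. Here I only need an upper bound, so it suffices to use $\log m \leq \log K + \log n + \log g(n) \leq \log n + \log \log G(n) + O_K(1)$. Then
\begin{equation*}
m \log m \leq K n g(n) \log \log G(n) + K n g(n) \log n + O_K(n g(n)).
\end{equation*}
The first term is $K n \log G(n) + O_K(n \log \log G(n))$ using the ceiling identity, and the second is $O_K\big(n \log G(n) \log n / \log \log G(n)\big)$, matching the stated error. The $-m + O(\log m)$ term is absorbed easily.

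There is no real obstacle; the only mild subtlety is keeping the error terms in part (a) in the sharper ``$\log \log \log$'' form rather than the cruder $\log(n)$-style bound used in part (b). This is the reason for treating the two parts separately: in (a) we want an equality, which forces us to carry the negative $\log \log \log G(n)$ correction from $\log g(n)$ explicitly, whereas in (b) a one-sided expansion suffices.
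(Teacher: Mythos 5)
Your proposal is correct and follows essentially the same route as the paper's proof: both rely on Stirling's approximation together with the two elementary estimates $g(n)\log\log G(n) = \log G(n) + O(\log\log G(n))$ (from the ceiling) and $\log g(n) = \log\log G(n) - \log\log\log G(n) + O(1)$, and both treat (a) two-sidedly while using only a one-sided expansion for (b). The only cosmetic difference is that the paper works from the cruder bound $\exp(m\log m - m) \leq m! \leq \exp(m\log m)$ rather than the $\log(m!) = m\log m - m + O(\log m)$ form you quote, which changes nothing in the outcome.
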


Recall that $O_K(x)$ refers to a quantity 
bounded in absolute value by a multiple of $x$ 
which depends only on $K$. 

\begin{proof}
We assume throughout the proof, as we may, 
that $n$ is sufficiently large. 
We shall recall (a weak form of) Stirling's approximation: 
\begin{center}
$\exp(n\log(n)-n) \leq n! \leq \exp(n\log(n))$. 
\end{center}
Note also that: 
\begin{align*}
\log G(n) / \log\log G(n) \leq g(n) 
& \leq \log G(n) / \log\log G(n) + 1 \\ & \leq 2 \log G(n) / \log\log G(n)
\end{align*}
and thus $\log g(n) \leq \log\log G(n)$. 
We therefore have: 
\begin{align*}
\log((K g(n))!) & \leq Kg(n)\log g(n) + (K \log K) g(n) \\
& \leq K \log G(n) + K \log\log G(n) 
+ 2K\log K \frac{\log G(n)}{\log\log G(n)}
\end{align*}
while: 
\begin{align*}
\log((K g(n))!) & \geq Kg(n)\log g(n) + (K \log K-K )g(n) \\
& \geq K \big( \frac{\log G(n)}{\log\log G(n)} \big)
(\log\log G(n) - \log\log\log G(n))
+ O_K \big( \frac{\log G(n)}{\log\log G(n)} \big)
\end{align*}
yielding (a). 
One may prove the bounds in (b) in much the same way. 
\end{proof}

\subsection{Residual finiteness growth}

Let $G$ be a group, generated by a finite set $S$. 
We denote by $B_S (n)$ the closed ball around $e$ 
in the word-metric induced on $G$ by $S$, 
so that: 
\begin{equation*}
B_S (n) = \lbrace s_1 \cdots s_m 
: 0\leq m\leq n, s_i \in S \cup S^{-1} \rbrace\text{.}
\end{equation*}
Recall that $G$ is \emph{residually finite} if, 
for any nontrivial element $g$ of $G$, 
there exists a finite-index subgroup $H$ of $G$ 
such that $g \notin H$. 
Since every finite-index subgroup contains a finite-index normal subgroup, 
the class of groups captured by this definition is unchanged
if we require $H$ to be normal. 
The minimal index of a finite-index 
(respectively finite-index normal) subgroup of $G$ which does not contain 
the element $g$ is the \emph{depth} 
(respectively \emph{normal depth}) of $g$ in $G$. 
Following \cite{BouRab}, 
the \emph{residual finiteness growth function} 
$\mathcal{F}_{G} ^S : \mathbb{N} \rightarrow \mathbb{N}$ of $G$ 
(with respect to $S$) 
is defined such that $\mathcal{F}_{G} ^S (n)$ 
is the maximal normal depth occurring among the nontrivial 
elements of $B_S(n)$. 
Similarly, the \emph{divisibility function} 
$\mathcal{D}_{G} ^S : \mathbb{N} \rightarrow \mathbb{N}$ of $G$ 
is defined such that $\mathcal{D}_{G} ^S (n)$ 
is the maximal depth occurring among the nontrivial 
elements of $B_S(n)$. 

It is easy to see that if $G$ is residually finite, 
then it satisfies the a priori stronger condition 
that for any finite subset $F$ of $G$, 
there exists a finite-index subgroup $H$ 
such that the coset map $\pi_H :G \rightarrow G/H$ 
(given by $\pi_H(g) = gH$) restricts to an injection on $F$. 
The \emph{injectivity radius} of the subgroup $H$ 
is the maximal $n$ for which the restriction 
of $\pi_H$ to $B_S(n)$ is injective 
(or $+\infty$ if the set of such $n$ is unbounded, 
though note that this only occurs if $G$ itself is finite). 
Following \cite{BoRaMcR}, we define 
the \emph{full residual finiteness growth function} (or \emph{residual girth}) 
$\mathcal{R}_{G} ^S : \mathbb{N} \rightarrow \mathbb{N}$
of $G$ (with respect to $S$) 
such that $\mathcal{R}_{G} ^S (n)$ 
is the minimal index of a normal subgroup of injectivity radius at least $n$. 
Similarly, the \emph{systolic growth function} of $G$ 
is the function $\Sigma_{G} ^S : \mathbb{N} \rightarrow \mathbb{N}$ 
defined such that $\Sigma_{G} ^S (n)$ is the 
minimal index of a subgroup of injectivity radius at least $n$. 
We have the following easy inequalities between the various growth functions
(note however that in many cases these bounds are far from sharp). 

\begin{lem} \label{TrivialRFGrowthIneq}
Let $G$ be a residually finite group, generated by a finite set $S$, 
and let $n \in \mathbb{N}$. 
\begin{itemize}
\item[(i)] $\mathcal{D}_{G} ^S (n) \leq \mathcal{F}_{G} ^S (n) \leq \mathcal{R}_{G} ^S (n)$; 

\item[(ii)] $\mathcal{D}_{G} ^S (n) \leq \Sigma_{G} ^S (n) \leq \mathcal{R}_{G} ^S (n)$; 

\item[(iii)] $\mathcal{F}_{G} ^S (n) \leq \mathcal{D}_{G} ^S (n)^{\mathcal{D}_{G} ^S (n)}$ and $\mathcal{R}_{G} ^S (n) \leq \Sigma_{G} ^S (n)^{\Sigma_{G} ^S (n)}$; 

\item[(iv)] $\mathcal{R}_{G} ^S (n) 
\leq \mathcal{F}_{G} ^S (2n) ^{\lvert B_S (n) \rvert^2}$ 
and $\Sigma_{G} ^S (n) 
\leq \mathcal{D}_{G} ^S (2n) ^{\lvert B_S (n) \rvert^2}$. 

\end{itemize}
\end{lem}

\begin{proof}
If $H \leq G$ has index $n$ then kernel of the action of $G$ 
on the cosets of $H$ is a normal subgroup of index at most $n! \leq n^n$, 
contained in $H$. Items (i)-(iii) are immediate. 
For (iv), note that if $g_1 , g_2 \in G$ satisfy $g_1 ^{-1} g_2 \notin H$, 
then $g_1$ and $g_2$ lie in different cosets of $H$. 
Intersecting such subgroups $H$ corresponding to all pairs of elements 
$g_1 , g_2 \in B_S(n)$ yields the desired bounds. 
\end{proof}

For given $G$, the functions 
$\mathcal{D}_{G} ^S$, $\mathcal{F}_{G} ^S$, $\mathcal{R}_{G} ^S$ 
and $\Sigma_{G} ^S$ depend on the choice of the finite 
generating set $S$ but, since a change of finite generating set 
changes the word metric by at most a multiplicative constant, 
the dependence is slight 
(see for instance Lemma 2.1 and 2.2 of \cite{BoRaMcR}). 

\begin{propn}
Let $G$ be a finitely generated residually 
finite group, and let $S,T\subseteq G$ 
be finite generating sets. 
Let $X \in \lbrace \mathcal{F},\mathcal{R},\mathcal{D},\Sigma \rbrace$. 
Then $X_{G} ^S\approx^s X_{G} ^T$. 
\end{propn}

\subsection{Number-theoretic observations}

\begin{thm}[Bertrand's Postulate]
For any $n \in \mathbb{N}_{\geq 2}$, 
there exists a prime number $p$ 
such that $n < p < 2n$. 
\end{thm}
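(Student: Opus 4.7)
The plan is to follow Erdős's elementary proof, based on a careful analysis of the prime factorization of the central binomial coefficient $\binom{2n}{n}$. The strategy is to combine a cheap lower bound on $\binom{2n}{n}$ with an upper bound that would be available \emph{if no prime existed in $(n,2n)$}, and to extract a contradiction for all sufficiently large $n$.

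First I would establish two preliminary estimates. The lower bound $\binom{2n}{n} \geq 4^n/(2n+1)$ is immediate from the fact that $\binom{2n}{n}$ is the largest of the $2n+1$ terms in the binomial expansion of $(1+1)^{2n}$. For the Chebyshev-type upper bound $\prod_{p \leq m} p \leq 4^{m-1}$, I would induct on $m$; the nontrivial case $m = 2k+1$ uses that every prime $p$ with $k+1 < p \leq 2k+1$ divides $\binom{2k+1}{k}$, together with $\binom{2k+1}{k} \leq 4^k$ (since two copies of $\binom{2k+1}{k}$ appear among the terms of $(1+1)^{2k+1}$).

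Next I would apply Legendre's formula to $(2n)!/(n!)^2$ to extract three facts about the prime factorization of $\binom{2n}{n}$: (i) if $p^r$ is the exact power of $p$ dividing $\binom{2n}{n}$, then $p^r \leq 2n$; (ii) any prime $p > \sqrt{2n}$ contributes with exponent at most $1$; and (iii) for $n \geq 3$, no prime $p$ with $2n/3 < p \leq n$ divides $\binom{2n}{n}$, because in this range $\lfloor 2n/p \rfloor = 2 = 2\lfloor n/p \rfloor$ so the $k=1$ term in Legendre's sum vanishes, while higher terms are zero.

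Assuming for contradiction that no prime lies in $(n,2n)$, facts (i)--(iii) combine to give
\begin{equation*}
\binom{2n}{n} \leq (2n)^{\sqrt{2n}} \prod_{p \leq 2n/3} p \leq (2n)^{\sqrt{2n}} \cdot 4^{2n/3}.
\end{equation*}
Comparing with the lower bound yields $4^{n/3} \leq (2n+1)(2n)^{\sqrt{2n}}$; taking logarithms, the left-hand side is linear in $n$ while the right-hand side grows like $\sqrt{n}\log n$, so the inequality fails once $n$ exceeds some explicit threshold $N_0$. The main remaining obstacle is therefore the finite check for $n < N_0$. I would handle this by exhibiting an explicit ascending chain of primes $2, 3, 5, 7, 13, 23, 43, 83, 163, 317, 631, \ldots$ in which each term is strictly less than twice its predecessor and whose last element exceeds $N_0$; for any $n$ in the unchecked range the first element of this chain exceeding $n$ is a prime witness in $(n, 2n)$.
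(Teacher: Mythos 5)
The paper states Bertrand's Postulate as a classical, known theorem and offers no proof of it; it is a black-box ingredient used only to establish Corollary~\ref{PrimeFnCoroll}. So there is no ``paper's own proof'' to compare against here. That said, your sketch is a faithful and essentially complete rendition of Erd\H{o}s's elementary proof: the lower bound $\binom{2n}{n}\geq 4^n/(2n+1)$, the primorial bound $\prod_{p\leq m}p\leq 4^{m-1}$ by induction through $\binom{2k+1}{k}$, the three Legendre-formula facts about the prime factorization of $\binom{2n}{n}$ (the $p^{r}\leq 2n$ bound, squarefreeness for $p>\sqrt{2n}$, and the vanishing for $2n/3<p\leq n$), and the resulting inequality $4^{n/3}\leq(2n+1)(2n)^{\sqrt{2n}}$ whose failure for large $n$ yields the contradiction. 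The closing device of an ascending chain of primes each less than twice its predecessor is exactly the standard way to dispatch the finite range below the explicit threshold. One small remark: in fact (iii) the argument for the vanishing of higher Legendre terms ($p^2>2n$) needs $2n/3>\sqrt{2n}$, i.e.\ $n\geq 5$; the cases $n=3,4$ must be checked directly, which is harmless since they fall under the finite verification anyway, but you should either adjust the stated threshold or note the direct check. Otherwise the argument is sound.
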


\begin{coroll} \label{PrimeFnCoroll}
For any nondecreasing function 
$f : \mathbb{N}\rightarrow \mathbb{N}_{\geq 3}$ 
there exists a nondecreasing function 
$d : \mathbb{N}\rightarrow \mathbb{N}$ such that
for all $n \in \mathbb{N}$, $d(n)$ is 
a prime number at least $5$, satisfying 
$f(n) \leq d(n) \leq 2 f(n)$. 
\end{coroll}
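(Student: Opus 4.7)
The plan is to construct $d$ recursively, using Bertrand's Postulate at each step to supply a prime inside $[f(n), 2f(n)]$ while exploiting the nondecreasing nature of $f$ to preserve monotonicity of $d$.

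For the base case $n=1$, I would apply Bertrand's Postulate to the integer $f(1) \geq 3$ to obtain a prime $p$ with $f(1) < p < 2f(1)$. Since $p > f(1) \geq 3$ is prime, $p \geq 5$, so setting $d(1) = p$ satisfies all the required conditions at $n=1$.

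For the inductive step, suppose $d(1) \leq \cdots \leq d(n-1)$ have been defined so that each $d(k)$ is a prime in $[f(k), 2f(k)]$ with $d(k) \geq 5$. I would split into two cases. If $d(n-1) \geq f(n)$, then I would simply set $d(n) = d(n-1)$; nondecreasingness of $f$ gives $d(n) = d(n-1) \leq 2f(n-1) \leq 2f(n)$, the lower bound holds by case assumption, and $d(n) \geq 5$ is inherited. If instead $d(n-1) < f(n)$, I would apply Bertrand's Postulate to $f(n) \geq 3$ to produce a prime $p$ with $f(n) < p < 2f(n)$; setting $d(n) = p$ places $d(n)$ in the required interval, gives $d(n) > f(n) > d(n-1)$ (so monotonicity is maintained), and forces $d(n) \geq 5$ because $p > 3$ is prime.

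There is no real obstacle here; Bertrand's Postulate does essentially all the work. The only minor subtlety is that one must simultaneously secure primality inside $[f(n), 2f(n)]$ and preserve the monotonicity of $d$, and the case split above handles this cleanly by allowing $d$ to remain constant whenever the previously chosen prime still lies in the current admissible window.
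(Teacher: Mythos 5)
Your proof is correct, and the paper offers no proof of this corollary at all, treating it as an immediate consequence of Bertrand's Postulate; your recursive construction with the two-case split is a fully rigorous version of exactly that. A marginally cleaner formulation that avoids the case split: define $d(n)$ to be the smallest prime strictly greater than $f(n)$; monotonicity of $d$ is then automatic from monotonicity of $f$, and Bertrand's Postulate together with $f(n)\geq 3$ gives $f(n) < d(n) < 2f(n)$ and $d(n)\geq 5$ in one stroke.
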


\begin{propn} \label{BuildingrProp}
For all $\epsilon > 0$ 
there exists $C_0$ such that for all $C \geq C_0$, 
the following holds 
Let $d , q : \mathbb{N}\rightarrow \mathbb{N}$ be 
nondecreasing functions such that, 
for all $n \in \mathbb{N}$: 
\begin{itemize}
\item[(i)] $d(n)$ is an odd prime number; 

\item[(ii)] $d(n) \geq C n \log (n) \big( \log \log(n) \big)^{1+\frac{\epsilon}{2}} + C$; 

\item[(iii)] $n \leq q(n) \leq d(n)/4$. 

\end{itemize}
Then there exists a function 
$r : \mathbb{N}\rightarrow \mathbb{N}$ satisfying: 
\begin{itemize}
\item[(a)] For all $n \in \mathbb{N}$, 
$q(n) < r(n) < q(n) +  17 n$ and $r(n) < d(n)/3$; 

\item[(b)] For all $l,m\in \mathbb{N}$, 
if $l \neq m$ then 
$r(l) \nequiv \pm r(m),\pm 2 r(m) \mod d(m)$. 

\end{itemize}
\end{propn}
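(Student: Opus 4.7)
The plan is to construct $r$ by induction on $n$, picking $r(n)$ from the interval $I_n := (q(n), q(n) + 17n) \cap \mathbb{Z}$ while avoiding the congruence conditions in (b) relative to the previously chosen values. Since $\lvert I_n \rvert = 17n - 1$, it suffices to show that condition (b) excludes strictly fewer than $17n - 1$ integers from $I_n$, which reduces the problem to a union bound.

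Suppose $r(1), \ldots, r(n-1)$ have been chosen. The constraints on $r(n)$ come in two flavors. Taking $m = n$ in (b), for each $l < n$ the condition $r(l) \not\equiv \pm r(n), \pm 2 r(n) \pmod{d(n)}$ is equivalent, upon inverting $2$ modulo the odd prime $d(n)$, to forbidding $r(n)$ from $4$ explicit residue classes mod $d(n)$. Since hypothesis (ii) ensures $d(n) > 17n$ once $C$ is large, each such residue class excludes at most one element of $I_n$, so these conditions together exclude at most $4(n-1)$ values. Taking $l = n$ in (b), for each $m < n$ the condition $r(n) \not\equiv \pm r(m), \pm 2 r(m) \pmod{d(m)}$ forbids $4$ residue classes mod $d(m)$, each of which excludes at most $\lfloor 17n/d(m) \rfloor + 1$ elements of $I_n$. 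Summing, the total exclusion from this second family is at most
\begin{equation*}
4(n-1) + 68 n \sum_{m=1}^{n-1} \frac{1}{d(m)}.
\end{equation*}
By hypothesis (ii), the series $\sum_{m \geq 1} 1/d(m)$ is bounded above by $K(\epsilon)/C$ for some constant $K(\epsilon)$, since $\sum 1/(m \log(m)(\log\log m)^{1+\epsilon})$ converges by the integral test. Combining, at most $8(n-1) + 68 K(\epsilon) n / C$ elements of $I_n$ are excluded, which is strictly less than $17n - 1$ for $C$ sufficiently large in terms of $\epsilon$. This produces a valid $r(n)$, and the bound $r(n) < d(n)/3$ in (a) follows from $r(n) < q(n) + 17 n \leq d(n)/4 + 17n$ together with $d(n) \geq 204 n$, which again holds by (ii) for $C$ large.

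The main obstacle is controlling the harmonic-type sum $\sum 1/d(m)$; this is precisely the reason hypothesis (ii) demands a $(\log\log n)^{1+\epsilon}$ factor with $\epsilon > 0$ rather than merely a $\log\log n$ factor, since $\sum 1/(m \log m \log\log m)$ diverges. Once this sum is bounded, the union bound is routine. For the finitely many small $n$ where the asymptotic estimates $d(n) > 17n$ or $d(n) \geq 204n$ might fail, we simply pick $r(n)$ by hand (possible because $C_0$ can be chosen to make $d(n)$ dominate any prescribed function of $n$ on an initial segment) and invoke the inductive argument for all larger $n$.
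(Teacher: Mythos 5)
Your proof is correct and follows essentially the same inductive union-bound strategy as the paper: at each step count the residue classes forbidden modulo $d(k)$ for $k < n$ and modulo $d(n)$ (using invertibility of $2$), and observe that convergence of $\sum 1/d(m)$ — guaranteed precisely by the $(\log\log n)^{1+\epsilon}$ factor in (ii) — keeps the number of excluded integers below $|I_n|$ once $C$ is large. The only differences from the paper are cosmetic: you fix the interval length at $17n-1$ rather than leaving a free parameter $M \geq 16n$, and you use the sharper observation that residue classes mod $d(n) > 17n$ hit the interval at most once, but the content is identical.
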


\begin{proof}
First, taking $C_0$ sufficiently large, 
we may assume that: 
\begin{equation*}
d(n) \geq 16 n \text{ for all }n \in \mathbb{N}\text{.}
\end{equation*}
Assume that 
suitable $r(1) , \ldots , r(n-1)$ have 
already been produced. 
We exhibit a positive integer $r(n)$ 
satisfying (a), and such that (b) holds for 
all $1 \leq l,m \leq n$ 
(for the base case, we may simply take $r(1)=2$). 

Let $M \in \mathbb{N}$ (to be chosen). 
We shall seek $r(n)$ in the interval 
$\mathbb{Z} \cap [q(n)+1,q(n)+M]$. 
The series: 
\begin{equation*}
\sum_{m=1} ^{\infty} \frac{1}{d(n)}
\end{equation*}
converges, by hypothesis (ii), 
to a limit $L > 0$. 
Moreover by taking $C_0$ to be sufficiently large, 
we can assume $L < 1/16$. 

For each $1 \leq m \leq n-1$ 
there are at most $4M/d(m) + 4$ ``bad'' integers 
$q(n)+1 \leq k \leq q(n)+M$ for which 
$k \equiv \pm r(m) \text{ or } \pm 2r(m) \mod d(m)$, 
and at most $4M/d(n) + 4$ ``bad'' integers 
$q(n)+1 \leq k \leq q(n)+M$ for which 
$r(m) \equiv \pm k \text{ or } \pm 2 k \mod d(n)$. 
Taking a union over all such $m$, 
we have at most: 
\begin{equation*}
8(n-1) + \frac{4Mn}{d(n)} + 4M \sum_{m=1} ^{n-1} \frac{1}{d(m)} < 8n + \frac{M}{2}
\end{equation*}
``bad'' integers in total in our range. 
Thus for any $M \geq 16n$, we can choose $r(n) \in \mathbb{N}$ with 
$q(n)+1 \leq r(n) \leq q(n)+M$, 
such that $r(n)$ is not ``bad''. 
Thus (b) is indeed satisfied for $1 \leq l,m \leq n$. 
In particular, we may take $M < 17n$ 
and choose $q(n) < r(n) < q(n) + 17n$. 
Choosing $C_0$ sufficiently large, 
this also guarantees $r(n) < d(n)/3$. 
\end{proof}

\begin{rmrk}
\normalfont
For the sake of the results to be proven in this Note, 
it shall be sufficient henceforth 
to take $q(n)=n$ for all $n$. 
In future applications, to be explored elsewhere, 
it shall be convenient to have the flexibility 
provided by other choices for $q$. 
\end{rmrk}

\subsection{Generalized B.H. Neumann groups} \label{NeumannSubsect}

Let $d,r_1,r_2 : \mathbb{N} \rightarrow \mathbb{N}$ 
be functions, 
such that for all $n \in \mathbb{N}$, 
$d(n)$ is an odd number at least $5$, 
and $r_1(n) + r_2(n) \leq d(n)-1$. 
Let $\alpha_n = \big(1 \; 2 \; \cdots \; d(n)\big) , 
\beta_n = \big(1 \; (1+r_1(n)) \; (1+r_1(n)+r_2(n))\big) \in \Alt (d(n))$. 
Under mild conditions, 
the permutations $\alpha_n$ and $\beta_n$ generate 
$\Alt (d(n))$. For instance, we have the following, 
which shall suffice for our purposes here. 

\begin{lem} \label{GenLem}
If $d(n)$ is a prime 
then $\langle\alpha_n,\beta_n\rangle=\Alt (d(n))$. 
\end{lem}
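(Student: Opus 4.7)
The plan is to identify $\langle \alpha_n, \beta_n\rangle$ with the full alternating group via a standard transitivity-plus-3-cycle argument, using the primality of $d(n)$ in an essential way.

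First I would observe that both generators lie in $\Alt(d(n))$: the permutation $\alpha_n$ is a cycle of odd length $d(n)$ (hence even), and $\beta_n$ is a $3$-cycle (hence even). So the containment $\langle \alpha_n, \beta_n \rangle \subseteq \Alt(d(n))$ is automatic, and only the reverse containment requires argument.

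Set $H = \langle \alpha_n, \beta_n \rangle \leq \Sym(d(n))$. Since $\alpha_n$ already acts as a single $d(n)$-cycle on $\{1,\ldots,d(n)\}$, the cyclic subgroup $\langle\alpha_n\rangle$ is transitive, and hence so is $H$. Next I would upgrade transitivity to primitivity: any block of a block system for $H$ on $\{1,\ldots,d(n)\}$ has size dividing $d(n)$, and since $d(n)$ is prime the only possibilities are blocks of size $1$ or $d(n)$. Therefore $H$ acts primitively on $\{1,\ldots,d(n)\}$.

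Now I would invoke the classical theorem of Jordan: a primitive subgroup of $\Sym(m)$ that contains a $3$-cycle must contain $\Alt(m)$. Since $H$ contains the $3$-cycle $\beta_n$ and acts primitively, Jordan's theorem yields $\Alt(d(n)) \subseteq H$, which combined with the first step gives equality. The main (minor) subtlety here is just knowing and citing Jordan's theorem cleanly; everything else is routine. Alternatively, if one prefers to avoid citing Jordan, one can argue by hand: conjugating $\beta_n$ by powers of $\alpha_n$ produces many $3$-cycles in $H$, and one checks directly that these generate $\Alt(d(n))$, using that $\alpha_n$ acts transitively so that suitable conjugates $\alpha_n^k \beta_n \alpha_n^{-k}$ overlap in exactly one or two points with $\beta_n$, letting one manufacture all $3$-cycles of the form $(a\ b\ c)$ with consecutive entries, and hence all $3$-cycles.
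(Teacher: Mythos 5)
Your proof is correct and follows essentially the same route as the paper: note both generators are even, deduce transitivity from the full cycle $\alpha_n$, deduce primitivity from primality of the degree, and finish by applying Jordan's theorem to the $3$-cycle $\beta_n$. The only detail left implicit is that Jordan's theorem requires the cycle length to be at most $d(n)-2$, which holds here since $d(n)\geq 5$ by hypothesis.
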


This is based on a classical fact from the theory 
of finite permutation groups. 

\begin{thm}[Jordan] \label{JordanThm}
Let $G \leq \Sym(d)$ be a primitive permutation group 
of degree $d$. Suppose that $G$ contains 
a $p$-cycle, for some prime $p\leq d-2$. 
Then $\Alt(d) \leq G$. 
\end{thm}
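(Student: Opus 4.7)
The plan is to verify the hypotheses of Jordan's Theorem (Theorem~\ref{JordanThm}) for $G := \langle \alpha_n, \beta_n \rangle \leq \Sym(d(n))$, and then observe that the reverse containment in $\Alt(d(n))$ is immediate.

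First, I would dispatch the easy containment $G \leq \Alt(d(n))$. Since $d(n)$ is an odd prime (note $d(n) \geq 5$), the cycle $\alpha_n$ has odd length and is therefore an even permutation; and $\beta_n$ is a $3$-cycle, hence also even. So both generators lie in $\Alt(d(n))$.

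Next, I would show $G$ is primitive. Transitivity is immediate from the fact that $\alpha_n$ is a single $d(n)$-cycle, which already acts transitively on $\{1, \ldots, d(n)\}$. Primitivity then follows from the standard fact that in any transitive action on a set of prime cardinality, every block of imprimitivity has size dividing $d(n)$, hence of size $1$ or $d(n)$, so only the trivial partitions are preserved.

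Finally, I would invoke Jordan's Theorem with $p = 3$: $\beta_n$ is a $3$-cycle, and $3 \leq d(n) - 2$ since $d(n) \geq 5$. The theorem then yields $\Alt(d(n)) \leq G$, which combined with the first step gives $G = \Alt(d(n))$. There is no real obstacle here; the entire content of the lemma lies in packaging the primality of $d(n)$ so as to upgrade transitivity to primitivity, after which Jordan's theorem does all the work.
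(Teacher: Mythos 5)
Your proposal does not prove the statement you were asked to prove. The statement is Jordan's theorem itself: \emph{any} primitive subgroup of $\Sym(d)$ containing a $p$-cycle for a prime $p \leq d-2$ must contain $\Alt(d)$. What you have written is instead a deduction \emph{from} Jordan's theorem, namely a verification that the specific group $G = \langle \alpha_n, \beta_n \rangle$ satisfies its hypotheses (transitive because $\alpha_n$ is a full cycle, primitive because the degree is prime, contains the $3$-cycle $\beta_n$) and hence equals $\Alt(d(n))$. That argument is correct, and it is essentially verbatim the paper's proof of Lemma~\ref{GenLem} --- but as a proof of Theorem~\ref{JordanThm} it is circular: you invoke the very theorem whose proof was requested.

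To be fair, the paper itself offers no proof of Theorem~\ref{JordanThm} either; it is stated, with attribution to Jordan, as a classical fact from the theory of finite permutation groups, and only Lemma~\ref{GenLem} is proved from it. So there is no in-paper argument to compare against. But if a proof were actually demanded, it would require genuinely different ideas --- for example, the standard approach via the normal closure of the $p$-cycle in $G$, analysis of the orbits of the subgroup generated by conjugates of a $p$-cycle, and an induction showing that a primitive group containing a $p$-cycle with $p \leq d-2$ is $(d-p+1)$-transitive, at which point classical bounds on the multiple transitivity of groups not containing $\Alt(d)$ finish the job. None of that appears in your proposal, so as an answer to the stated problem it has a complete gap; as a proof of Lemma~\ref{GenLem} it is fine and matches the paper.
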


\begin{proof}[Proof of Lemma \ref{GenLem}]
Since $d(n)$ is odd, 
$\langle \alpha_n,\beta_n \rangle \leq \Alt (d(n))$. 
Clearly $\langle \alpha_n,\beta_n \rangle$ is 
transitive, and since $d(n)$ is prime, 
it is also primitive. 
The conclusion follows from Theorem \ref{JordanThm}. 
\end{proof}

Let $W = C_3 \wr \mathbb{Z} = \bigoplus_{\mathbb{Z}} C_3 \rtimes \mathbb{Z}$ be the regular restricted wreath product 
of $C_3$ and $\mathbb{Z}$ 
(so that $\mathbb{Z}$ acts on $\bigoplus_{\mathbb{Z}} C_3$ 
by shifting co-ordinates). 
Let $\alpha_{\infty}$ be a generator for $\mathbb{Z}$ 
and let $\beta_{\infty} \in \bigoplus_{\mathbb{Z}} C_3$ 
be a generator for the copy of $C_3$ supported at $0$, 
so that $\lbrace \alpha_{\infty} , \beta_{\infty} \rbrace$ 
is a generating set for $W$. 
A key ingredient in our results is the observation 
that, under mild hypotheses, 
the Cayley graphs of the $\Alt (d(n))$, 
with respect to $\lbrace \alpha_n,\beta_n \rbrace$ 
are locally isomorphic to that of 
$W$, with respect to 
$\lbrace \alpha_{\infty},\beta_{\infty} \rbrace$. 

\begin{propn} \label{BigFactorsProp}
Let $w (x,y) \in F(x,y)$ be a freely reduced word of 
length at most $n$ in the variables $x$ and $y$. 
Let $m \in \mathbb{N}$ and suppose: 
\begin{equation} \label{TripodSpreadEqn}
r_1 (m) , r_2 (m) , d(m) - r_1(m) - r_2 (m) \geq 2n+1. 
\end{equation}
Then $w (\alpha_m,\beta_m) = e$ in $\Alt(d(m))$, 
if and only if 
$w (\alpha_{\infty},\beta_{\infty}) = e$ in $W$. 
\end{propn}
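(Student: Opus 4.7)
The plan is to put $w$ in a normal form that separates its $\alpha$-translation from its $\beta$-contributions, then exploit the disjointness of supports in $\Alt(d(m))$ supplied by (\ref{TripodSpreadEqn}). I will write $w = s_1 s_2 \cdots s_\ell$ as a product of letters in $\{x^{\pm 1}, y^{\pm 1}\}$ with $\ell \leq n$; for each index $i$ with $s_i = y^{\epsilon_i}$ let $B_i$ denote the sum of the $x$-exponents of $s_1 \cdots s_{i-1}$, and let $A$ be the total sum of $x$-exponents in $w$. An easy induction on $\ell$ gives the identity
\[
w(\alpha, \beta) = \Bigl[\prod_{i : s_i = y^{\pm 1}} \alpha^{B_i} \beta^{\epsilon_i} \alpha^{-B_i}\Bigr] \cdot \alpha^A
\]
in any group generated by $\alpha$ and $\beta$, with $|B_i| \leq n - 1$ and $|A| \leq n$.

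Evaluated in $W$, the conjugate $\alpha_\infty^B \beta_\infty^\epsilon \alpha_\infty^{-B}$ is the generator of the $C_3$-factor supported at position $B$ raised to the power $\epsilon$, and all such elements commute. Hence $w(\alpha_\infty, \beta_\infty) = (f_w, A)$ with $f_w(B) \equiv \sum_{i : B_i = B} \epsilon_i \pmod 3$, so $w = e$ in $W$ if and only if $A = 0$ and every inner sum $\sum_{i : B_i = B} \epsilon_i$ vanishes modulo $3$.

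In $\Alt(d(m))$, the same factor $\alpha_m^{B_i} \beta_m^{\epsilon_i} \alpha_m^{-B_i}$ is the signed $3$-cycle with support $S(B_i) := \{1 + B_i,\ 1 + B_i + r_1(m),\ 1 + B_i + r_1(m) + r_2(m)\}$ modulo $d(m)$. The main obstacle — and the only step where (\ref{TripodSpreadEqn}) is used in an essential way — will be to show that each $S(B_i)$ has size $3$ and that $S(B_i) \cap S(B_{i'}) = \emptyset$ in $\mathbb{Z}/d(m)\mathbb{Z}$ whenever $B_i \neq B_{i'}$. Both claims reduce to verifying that every nonzero element of the difference set $\{s' - s : s, s' \in \{0, r_1(m), r_1(m) + r_2(m)\}\}$ has absolute residue modulo $d(m)$ at least $2n+1$, which is a direct case check against the three lower bounds in (\ref{TripodSpreadEqn}); disjointness then follows from $|B_i - B_{i'}| \leq 2(n-1) < 2n+1$.

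Once disjointness is established, the $3$-cycles commute and group by common value of $B_i = B$, giving $w(\alpha_m, \beta_m) = P \cdot \alpha_m^A$, where $P$ is a product of disjoint $3$-cycles indexed by those $B$ with $e_B := \sum_{i : B_i = B} \epsilon_i \not\equiv 0 \pmod 3$. I then compare supports: $P$ moves at most $3(2n-1) = 6n - 3$ points of $\{1, \ldots, d(m)\}$, whereas for $0 < |A| \leq n$ the permutation $\alpha_m^A$ is the cyclic shift $i \mapsto i + A \pmod{d(m)}$ and moves all $d(m) \geq 6n+3$ points. So $P^{-1} = \alpha_m^A$ is impossible when $A \neq 0$, forcing $A = 0$ whenever $w(\alpha_m, \beta_m) = e$; the remaining equation $P = e$ together with disjointness of supports then forces $e_B \equiv 0 \pmod 3$ for every $B$. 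These are exactly the conditions characterizing $w(\alpha_\infty, \beta_\infty) = e$ in $W$, and the reverse implication is immediate by reading the same calculation backwards.
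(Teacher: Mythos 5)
Your proof is correct and mirrors the paper's: both rewrite $w$ as an ordered product of conjugates $\alpha^{B}\beta^{\pm 1}\alpha^{-B}$ followed by a power of $\alpha$, use the spread hypothesis (\ref{TripodSpreadEqn}) to make the resulting $3$-cycles in $\Alt(d(m))$ disjointly supported and hence commuting, force the $\alpha$-power to vanish, and then read off that each coordinate sum $e_B$ must vanish modulo $3$ on both sides. The only cosmetic difference is in how $A=0$ is forced when $w(\alpha_m,\beta_m)=e$ --- you count moved points ($P$ moves at most $6n-3$ of the $d(m)\geq 6n+3$ points), while the paper observes that $\alpha_m^{-l}$ would then have order dividing $3$, giving $d(m)\leq 3|l|\leq 3n$ --- and either argument is fine.
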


\begin{proof}
Working from the left and applying 
relations of the form $x^i y^{\pm 1} = (x^i y x^{-i})^{\pm 1}x^i$, 
we may rewrite $w(x,y)$ in the form: 
\begin{equation*}
(x^{i_1} y x^{-i_1})^{\epsilon_1}\cdots(x^{i_1} y x^{-i_k})^{\epsilon_1} x^l
\end{equation*} 
for some $i_j , l \in \mathbb{Z}$ with 
$\lvert i_j \rvert , \lvert l \rvert \leq n$, 
and $\epsilon_j = \pm 1$. 
Letting $H=\langle x^i y x^{-i}
:\lvert i\rvert\leq n \rangle$, 
there exist $v(x,y) \in [H,H]$ 
and $c_i , l \in \mathbb{Z}$ such that: 
\begin{equation*}
w (x,y) = v(x,y) \Big(  \prod_{i=-n} ^n (x^i y x^{-i})^{c_i} \Big) x^l \text{.}
\end{equation*}
Now consider the elements: 
\begin{center}
$\alpha_m ^i \beta_m \alpha_m ^{-i} 
= \big( (i+1) \; (r_1(m)+i+1) \; (r_1(m)+r_2(m)+i+1) \big) \in \Alt (d(m))$ 
\end{center}
for $\lvert i \rvert \leq n$. 
By the inequalities (\ref{TripodSpreadEqn}) 
these $3$-cycles are disjointly supported, 
hence commute with one another. 
Meanwhile all $\alpha_{\infty} ^i \beta_{\infty} \alpha_{\infty} ^{-i}$ 
lie in $\bigoplus_{\mathbb{Z}} C_3$, 
an abelian group of exponent $3$. 
Thus $v(\alpha_m,\beta_m) = e$; 
$v(\alpha_{\infty},\beta_{\infty}) = e$ 
and $w(\alpha_m,\beta_m) \alpha_m ^{-l}$ 
is a permutation of order dividing $3$. 

Now suppose 
$w(\alpha_m,\beta_m) = e$. 
If $l \neq 0$, 
then the order of $\alpha_m$ is 
$d(m) \leq 3 \lvert l \rvert \leq 3n$, 
contradicting (\ref{TripodSpreadEqn}). 
Thus $l=0$, so that: 
\begin{equation} \label{3cyceqn}
w(\alpha_m,\beta_m) 
= \prod_{i=-n} ^n (\alpha_m ^i \beta_m \alpha_m ^{-i})^{c_i} 
\text{ and } 
w(\alpha_{\infty},\beta_{\infty}) 
= \prod_{i=-n} ^n (\alpha_{\infty} ^i \beta_{\infty} \alpha_{\infty} ^{-i})^{c_i}\text{.}
\end{equation} 
Since the $\alpha_m ^i \beta_m \alpha_m ^{-i}$ 
are disjointly supported $3$-cycles, 
$w(\alpha_m,\beta_m) = e$ and (\ref{3cyceqn}) 
together imply that $3 | c_i$ for all $i$. 
But then (\ref{3cyceqn}) yields 
$w(\alpha_{\infty},\beta_{\infty}) =e$ also. 

Conversely suppose $w(\alpha_{\infty},\beta_{\infty}) =e$. 
Since $\alpha_{\infty}$ has infinite order in $W$, 
we have $l=0$, so (\ref{3cyceqn}) holds again. 
Since 
$\alpha_{\infty} ^i \beta_{\infty}\alpha_{\infty} ^{-i} 
 \in \bigoplus_{\mathbb{Z}} C_3$
is a generator for the copy of $C_3$ supported at $i$, 
(\ref{3cyceqn}) implies that $3 | c_i$ for all $i$, 
so that $w(\alpha_m,\beta_m) = e$ also. 
\end{proof}

Let $\alpha = (\alpha_n) , \beta = (\beta_n) \in \prod_n \Alt(d(n))$ and let 
$S = \lbrace \alpha,\beta \rbrace$. 
We define the \emph{$(d,r_1,r_2)$-generalized B.H. Neumann group} to be $B(d,r_1,r_2)=\langle S \rangle\leq\prod_n \Alt(d(n))$. 
The group-theoretic properties of $B(d,r_1,r_2)$ depend 
strongly on the functions $d, r_1$ and $r_2$. 
If $d$ is bounded, then clearly $B(d,r_1,r_2)$ is a finite 
group. On the other hand, 
if $d$ is increasing and $r_1,r_2 \equiv 1$, 
then $B(d,r_1,r_2)$ is an infinite group, 
whose isomorphism-type uniquely determines the 
function $d$; these were the original groups 
studied bu B.H. Neumann \cite{Neum}. 
The groups of fast residual finiteness growth 
constructed in \cite{BoRaSewa} are all generalized B.H. Neumann groups. 

\section{Proofs of main results}

\subsection{Construction of the examples} \label{ConstrSubsect}


Let $\epsilon > 0$, let $C_0 > 0$ be as in Proposition \ref{BuildingrProp}, 
and let $C \geq C_0$. 
Throughout this Section, $f : \mathbb{N} \rightarrow \mathbb{N}_{\geq 3}$ 
is a nondecreasing function satisfying: 
\begin{equation} \label{MainSect3Ineq}
f(n) \geq C n \log (n) \big( \log \log(n) \big)^{1+\frac{\epsilon}{2}} + C
\end{equation}
for all $n\in\mathbb{N}$. 

\begin{ex} \label{MainFunctionEx}
Let $F : \mathbb{N} \rightarrow \mathbb{N}$ 
be a nondecreasing function satisfying 
condition (a) of Theorem \ref{MainThmRFG}, 
with respect to $c,\epsilon > 0$. 
Let $C_0$ satisfy
Proposition \ref{BuildingrProp} 
with respect to $\epsilon/2$ and let $C_1 = C \geq C_0$. 
Let $C_2 > 0$ satisfy the conclusion of Lemma \ref{FactorialInvLem} 
and set $f(n) = \lceil \log F(n+C_2) / \log\log F(n+C_2) \rceil$. 
By Lemma \ref{FactorialInvLem}, $f$ satisfies (\ref{MainSect3Ineq}). 
\end{ex}

We let $d : \mathbb{N}\rightarrow \mathbb{N}$ 
be a function satisfying the conclusion 
of Corollary \ref{PrimeFnCoroll} with respect to $f$ 
and let $q : \mathbb{N}\rightarrow \mathbb{N}$ 
be a nondecreasing function satisfying 
$n \leq q(n) \leq d(n)/4$ 
for all $n \in \mathbb{N}$. 
Then $d$ and $q$ satisfy the hypotheses 
of Proposition \ref{BuildingrProp}. 
Let $r : \mathbb{N}\rightarrow \mathbb{N}$ 
satisfy conclusions (a) and (b) 
of Proposition \ref{BuildingrProp}. 
Set $r_1 = r_2 = r$ and let 
$\alpha_n,\beta_n \in \Alt (d(m))$ be as in 
Subsection \ref{NeumannSubsect}. 


\begin{lem} \label{CommutingPermLem}
For $m,n \in \mathbb{N}$, 
$\beta_m$ and $\alpha_m ^{r(n)} \beta_m \alpha_m ^{-r(n)}$ commute in $\Alt (d(m))$ iff $m \neq n$. 
\end{lem}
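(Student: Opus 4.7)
The plan is to reduce the commutation question to a purely combinatorial statement about the supports of two $3$-cycles in $\Alt(d(m))$, then read off both directions from the number-theoretic guarantees provided by Proposition \ref{BuildingrProp}.

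First, I would compute explicitly. Since $r_1 = r_2 = r$, the permutation $\beta_m$ is the $3$-cycle on the set $S_m = \{1,\, 1+r(m),\, 1+2r(m)\}$ (mod $d(m)$), and conjugating by $\alpha_m^{r(n)}$, which acts as the cyclic shift $k \mapsto k + r(n)$ on $\{1, \dots, d(m)\}$, yields the $3$-cycle on $S_m + r(n)$. So both elements are $3$-cycles, and I would invoke the standard fact that two $3$-cycles in a symmetric group commute if and only if their supports are either equal as sets or disjoint.

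Next I would analyse these two alternatives in terms of $r(n) \bmod d(m)$. The supports meet iff $r(n) \in S_m - S_m = \{0, \pm r(m), \pm 2r(m)\} \pmod{d(m)}$. They are equal iff $r(n)$ lies in the translation stabiliser of $S_m$, which is a subgroup of $\mathbb{Z}/d(m)\mathbb{Z}$ of order dividing $3$; since $d(m)$ is a prime greater than $3$, this stabiliser is trivial, so equality holds iff $r(n) \equiv 0 \pmod{d(m)}$. Consequently the two $3$-cycles commute iff $r(n) \not\equiv \pm r(m), \pm 2 r(m) \pmod{d(m)}$.

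Finally I would split on cases. If $m = n$, then $r(n) = r(m)$, which is one of the forbidden residues (using $r(m) < d(m)/3$ to see $r(m) \not\equiv 0$), so the $3$-cycles do not commute. If $m \neq n$, Proposition \ref{BuildingrProp}(b) applied with $(l,m) = (n,m)$ gives $r(n) \not\equiv \pm r(m), \pm 2 r(m) \pmod{d(m)}$, so the criterion is satisfied and the $3$-cycles commute. The only mild subtlety, which I would flag, is that the $m \neq n$ case has two sub-possibilities (disjoint supports versus equal supports, the latter when $r(n) \equiv 0 \pmod{d(m)}$); both yield commutation, and Proposition \ref{BuildingrProp}(b) is consistent with either occurring, so nothing further is required.
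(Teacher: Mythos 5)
Your proof is correct and follows essentially the same route as the paper: compute the conjugate as the $3$-cycle on $S_m + r(n)$, invoke the equal-or-disjoint-supports criterion for commuting $3$-cycles, and apply Proposition~\ref{BuildingrProp}(b) for $m \neq n$. The only cosmetic difference is in the $m = n$ direction: the paper checks non-commutation directly by tracking the image of $1$ under the two products, whereas you first package everything into the clean equivalence ``commute iff $r(n) \not\equiv \pm r(m), \pm 2r(m) \pmod{d(m)}$'' (using the triviality of the translation stabiliser and $0 < 2r(m) < d(m)$) and then note $r(m)$ itself is a forbidden residue; both work.
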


\begin{proof}
Writing indices modulo $d(m)$, we have: 
\begin{equation*}
\alpha_m ^{r(n)} \beta_m \alpha_m ^{-r(n)} = \big((1+r(n)) \; (1+r(m)+r(n)) \; (1+2r(m)+r(n))\big)
\text{.}
\end{equation*}
Thus, when $r(n) \nequiv \pm r(m),\pm 2 r(m) \mod d(m)$, 
$\beta_m$ and $\alpha_m ^{r(n)} \beta_m \alpha_m ^{-r(n)}$ are either equal or disjointly supported, 
hence commute. 
By Proposition \ref{BuildingrProp}, 
this holds for all $m \neq n$. 
By contrast, 
$\beta_m \alpha_m ^{r(m)} \beta_m \alpha_m ^{-r(m)}$
sends $1$ to $1+r(m)$, whereas 
$\alpha_m ^{r(m)} \beta_m \alpha_m ^{-r(m)} \beta_m$ 
sends $1$ to $1 + 2 r(m) \nequiv 1 + r(m) \mod d(m)$, 
since $0 < r(m) < d(m)$. 
\end{proof}

Let $G = B(d,r,r)$ be the associated generalized 
B.H. Neumann group, 
generated by $S = \lbrace \alpha,\beta \rbrace$, 
as defined in Subsection \ref{NeumannSubsect}. 
Our goal for the remainder of the paper 
shall be to show that 
$\mathcal{F}_G ^S (n)$ is approximately 
$\lvert \Alt(d(n)) \rvert$; 
$\mathcal{R}_G ^S (n)$ is approximately 
$\prod_{m=1} ^n \lvert \Alt(d(m)) \rvert$, 
and $\mathcal{D}_G ^S (n)$ is approximately $d(n)$, 
from which Theorems \ref{MainThmRFG}, 
\ref{MainThmFullRFG} and \ref{MainThmDiv} shall follow. 

\subsection{Bounds on residual finiteness growth}

For $m \in \mathbb{N}$, let 
$\pi_m : \prod_k \Alt((d(k))) \rightarrow \Alt(d(m))$ 
be projection to the $m$th factor. 
Let $T_m \leq \prod_n \Alt(d(n))$ be the copy of 
$\Alt (d(m))$ supported at the $m$th co-ordinate; 
that is, 
\begin{equation*}
T_m = \bigcap_{m \neq n} \ker (\pi_n)
\end{equation*}
Thus for $w \in F(x,y)$ a freely reduced word 
and $m \in \mathbb{N}$, 
\begin{equation*}
\pi_m (w(\alpha,\beta)) = w(\alpha_m,\beta_m)\text{.}
\end{equation*}

\begin{lem} \label{SingleFactorEltLem}
For all $m \in \mathbb{N}$, 
$B_S \big( 4+4r(m) \big) \cap T_m \neq \lbrace e \rbrace$. 
\end{lem}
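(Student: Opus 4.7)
The plan is to exhibit an explicit short nontrivial word in $\alpha,\beta$ whose evaluation lies in $T_m$. The key input is Lemma \ref{CommutingPermLem}, which is precisely engineered to produce a pair of elements of $\Alt(d(n))$ that commute for $n \neq m$ but fail to commute for $n = m$; their commutator then automatically vanishes in every coordinate except the $m$-th.

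Concretely, I would consider the word
\[ w(x,y) = [y,\; x^{r(m)} y x^{-r(m)}] \in F(x,y), \]
and set $g = w(\alpha,\beta)$. Expanding the commutator gives
\[ w(x,y) = y^{-1} x^{r(m)} y^{-1} x^{-r(m)} y x^{r(m)} y x^{-r(m)}, \]
which is freely reduced (no cancellations occur between consecutive blocks, since $x,y$ are free generators and $r(m) > 0$); thus $w$ has word length exactly $4 + 4r(m)$, and $g \in B_S(4 + 4r(m))$.

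For each $n$, coordinate-wise evaluation yields $\pi_n(g) = [\beta_n,\; \alpha_n^{r(m)} \beta_n \alpha_n^{-r(m)}]$. By Lemma \ref{CommutingPermLem}, this commutator is trivial whenever $n \neq m$ (placing $g$ in $T_m$) and nontrivial when $n = m$ (guaranteeing $g \neq e$). Hence $g \in T_m \cap B_S(4+4r(m)) \setminus \lbrace e \rbrace$, as required.

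There is essentially no obstacle: everything has been set up so that the desired element is simply the commutator $[y, x^{r(m)} y x^{-r(m)}]$ evaluated at $(\alpha,\beta)$. The one substantive choice is the exact shape of the word, tuned so that its length matches the bound $4+4r(m)$ on the nose rather than being off by a multiplicative constant.
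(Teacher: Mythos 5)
Your proposal is correct and matches the paper's proof exactly: both exhibit the element $[\beta,\alpha^{r(m)}\beta\alpha^{-r(m)}]$, note that its word-length is bounded by $4+4r(m)$, and invoke Lemma \ref{CommutingPermLem} to see that it vanishes in every coordinate $n \neq m$ but not in coordinate $m$.
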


\begin{proof}
The element $[\beta,\alpha ^{r(m)} \beta \alpha ^{-r(m)}]$ of $G$ lies in $T_m$ 
by Lemma \ref{CommutingPermLem}, 
and clearly has word-length at most $4+4r(m)$ 
with respect to $S$. 
\end{proof}

\begin{coroll} \label{SimpleSubLem}
For all $m \in \mathbb{N}$, $T_m \leq G$. 
\end{coroll}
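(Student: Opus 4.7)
The plan is to show that $H := G \cap T_m$ is a nontrivial normal subgroup of $T_m$ (identified with $\Alt(d(m))$), and then invoke the simplicity of $\Alt(d(m))$ to conclude $H = T_m$, i.e.\ $T_m \leq G$. Since $d(m) \geq 5$ (by the construction, as $d(m)$ is a prime at least $5$ coming from Corollary \ref{PrimeFnCoroll}), $\Alt(d(m))$ is simple, so this reduction is legitimate.

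First, I would verify that $H$ is normalized by $G$: given $x \in H \subseteq T_m$ and any $h \in G \subseteq \prod_n \Alt(d(n))$, the conjugate $h^{-1} x h$ obviously lies in $G$, and for every $n \neq m$ we have
\begin{equation*}
\pi_n(h^{-1} x h) = \pi_n(h)^{-1} \pi_n(x) \pi_n(h) = \pi_n(h)^{-1} \cdot e \cdot \pi_n(h) = e,
\end{equation*}
so $h^{-1} x h \in T_m$ as well, hence $h^{-1} x h \in H$. Thus $H \trianglelefteq G$.

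Next, applying the projection $\pi_m$, the image $\pi_m(H)$ is a normal subgroup of $\pi_m(G)$. Because $d(m)$ is prime, Lemma \ref{GenLem} gives $\pi_m(G) = \langle \alpha_m, \beta_m \rangle = \Alt(d(m))$, so $\pi_m(H) \trianglelefteq \Alt(d(m))$. By Lemma \ref{SingleFactorEltLem}, $H$ contains a nontrivial element, and since $\pi_m$ restricts to an isomorphism $T_m \xrightarrow{\sim} \Alt(d(m))$, $\pi_m(H)$ is a nontrivial normal subgroup of the simple group $\Alt(d(m))$, forcing $\pi_m(H) = \Alt(d(m)) = \pi_m(T_m)$. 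Because $\pi_m|_{T_m}$ is injective, this yields $H = T_m$, completing the proof.

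There is no serious obstacle here; the only point to be careful about is confirming that $\pi_m(G)$ really is all of $\Alt(d(m))$, which is precisely the content of Lemma \ref{GenLem} and requires the primality of $d(m)$ guaranteed by the construction in Subsection \ref{ConstrSubsect}.
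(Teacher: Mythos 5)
Your proof is correct and follows essentially the same route as the paper's: show $G \cap T_m$ is normal in $G$, project via $\pi_m$, use Lemma \ref{GenLem} to identify $\pi_m(G)$ with $\Alt(d(m))$, and invoke Lemma \ref{SingleFactorEltLem} together with the simplicity of $\Alt(d(m))$ and injectivity of $\pi_m|_{T_m}$. The only cosmetic difference is that you verify normality of $G\cap T_m$ by checking projections directly, whereas the paper gets it in one line from $T_m\vartriangleleft\prod_n\Alt(d(n))$.
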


\begin{proof}
Since $T_m \vartriangleleft \prod_n \Alt(d(n))$, 
$G \cap T_m \vartriangleleft G$. 
For $t \in G \cap T_m$ and 
any word $w(x,y)$ in the variables $x$ and $y$,  
\begin{equation*}
w(\alpha_m,\beta_m) \pi_m(t) w(\alpha_m,\beta_m)^{-1} 
= \pi_m (w(\alpha,\beta) t w(\alpha,\beta)^{-1}) 
\in \pi_m (G \cap T_m)
\end{equation*}
so by Lemma \ref{GenLem}, 
$\pi_m (G \cap T_m) \vartriangleleft \Alt(d(m))$. 
By Lemma \ref{SingleFactorEltLem}, 
$G \cap T_m$ is nontrivial, 
but $\pi_m|_{T_m}$ is injective 
and $\Alt(d(m))$ is simple, 
so $\pi_m (G \cap T_m) = \Alt(d(m))$. 
The claim follows, since $T_m \cong \Alt(d(m))$. 
\end{proof}

\begin{propn} \label{RFGLBProp}
For all $m \in \mathbb{N}$, 
$\mathcal{F}_G ^S (4+4r(m)) \geq d(m)! / 2$ 
and $\mathcal{D}_G ^S (4+4r(m)) \geq d(m)$. 
\end{propn}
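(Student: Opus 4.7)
The plan is to combine Lemma \ref{SingleFactorEltLem} and Corollary \ref{SimpleSubLem} to exhibit a single short word whose depth in $G$ is already at least $|\Alt(d(m))| = d(m)!/2$. Specifically, I would set $t = [\beta, \alpha^{r(m)}\beta\alpha^{-r(m)}]$. By Lemma \ref{SingleFactorEltLem}, $t$ is a nontrivial element of $B_S(4+4r(m)) \cap T_m$, so it suffices to show that every finite quotient of $G$ distinguishing $t$ from the identity has order at least $d(m)!/2$.

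So I would let $\pi : G \rightarrow Q$ be an arbitrary homomorphism to a finite group with $\pi(t) \neq e$, and analyze $N := \ker(\pi) \cap T_m$. Since $T_m \leq G$ by Corollary \ref{SimpleSubLem} and $\ker(\pi) \trianglelefteq G$, we have $N \trianglelefteq T_m$. The key point is that $T_m \cong \Alt(d(m))$ is simple (we have $d(m) \geq 5$ by our construction via Corollary \ref{PrimeFnCoroll}), so $N$ is either trivial or all of $T_m$. Since $t \in T_m \setminus \ker(\pi)$, the subgroup $N$ cannot be all of $T_m$, so $N = \lbrace e \rbrace$. Therefore $\pi$ restricts to an injection on $T_m$, giving $|Q| \geq |\pi(T_m)| = |T_m| = d(m)!/2$.

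Taking the infimum of $|Q|$ over such $\pi$ yields $D_G(t) \geq d(m)!/2$, and since $t$ is a nontrivial element of $B_S(4+4r(m))$, the definition of residual finiteness growth gives $\mathcal{F}_G^S(4+4r(m)) \geq D_G(t) \geq d(m)!/2$, as required. There is no serious obstacle here: all the essential work (constructing a short element of $T_m$, and verifying that $T_m$ is a simple normal subgroup of $G$ isomorphic to $\Alt(d(m))$) has already been done in the preceding lemma and corollary, and simplicity of $\Alt(d(m))$ is what converts those ingredients into the desired lower bound on depth.
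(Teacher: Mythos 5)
Your proof is correct and takes essentially the same route as the paper: both arguments use Lemma \ref{SingleFactorEltLem} to produce a nontrivial element of $B_S(4+4r(m)) \cap T_m$, then combine Corollary \ref{SimpleSubLem} with the simplicity of $T_m \cong \Alt(d(m))$ to show any finite quotient distinguishing that element must contain a copy of $T_m$. The paper phrases it contrapositively (any homomorphism to a group of order less than $d(m)!/2$ kills $T_m$), but the logical content is identical.
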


\begin{proof}
Let $e \neq g_m \in B_S \big( 4+4r(m) \big) \cap T_m$ 
(such an element exists 
by Lemma \ref{SingleFactorEltLem}). 
Let $Q$ be a finite group with 
$\lvert Q \rvert < d(m)! / 2$, 
and let $\phi : G \rightarrow Q$ be a homomorphism. 
By Corollary \ref{SimpleSubLem} we have 
an induced homomorphism $\phi |_{T_m} : T_m \rightarrow Q$, which is not injective, 
and by simplicity of $T_m$, 
must therefore be the trivial homomorphism. 
Thus $T_m \leq \ker (\phi)$, 
and in particular, $g_m \in \ker (\phi)$. 
Since this holds for all such $\phi$, 
$g_m$ has normal depth at least $d(m)! / 2$. 
If $H$ is a subgroup of $G$ of index $d<d(m)$, 
then the action of $G$ on the cosets of $H$ in $G$ 
induces a homomorphism $\phi : G \rightarrow \Sym(d)$ 
whose kernel is contained in $H$. 
Since $\lvert \Sym(d) \rvert < d(m)!/2$ 
we may argue as above, and deduce $g_m \in \ker(\phi) \leq H$, 
so that $g_m$ has depth at least $d(m)$. 
\end{proof}

\begin{propn} \label{RFGUBProp}
For all $n \in \mathbb{N}$, $\mathcal{F}_G ^S (n) \leq d(n)!/2$ 
and $\mathcal{D}_G ^S (n) \leq d(n)$. 
\end{propn}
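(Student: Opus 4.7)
The plan is to show that every nontrivial $g \in B_S(n)$ survives in some projection $\pi_m : G \to \Alt(d(m))$ with $m \leq n$. Since by Lemma \ref{GenLem} each such $\pi_m$ is surjective, its image is a finite group of order $d(m)!/2 \leq d(n)!/2$ (using that $d$ is nondecreasing), and this gives the desired bound $D_G(g) \leq d(n)!/2$.

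Writing $g = w(\alpha, \beta)$ with $|w| \leq n$, the real task is thus the contrapositive: if $w(\alpha_m, \beta_m) = e$ in $\Alt(d(m))$ for every $m \leq n$, then in fact $w(\alpha_{m'}, \beta_{m'}) = e$ for every $m' > n$ as well, so $g = e$ in $G$. The main tool is Proposition \ref{BigFactorsProp}. I would pick some index $m_0 \leq n$ at which the tripod spread condition $r(m_0), d(m_0) - 2r(m_0) \geq 2n+1$ holds for word length $n$; the hypothesis forces $w(\alpha_{m_0}, \beta_{m_0}) = e$, and hence by Proposition \ref{BigFactorsProp} we obtain $w(\alpha_{\infty}, \beta_{\infty}) = e$ in the wreath product $W$. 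Reapplying Proposition \ref{BigFactorsProp} at every $m' \geq 2n$ (where tripod spread is automatic, since $r(m') > m' \geq 2n$ and $d(m') - 2r(m') > d(m')/3$ is comfortably large by the growth rate of $d$) then yields $w(\alpha_{m'}, \beta_{m'}) = e$ at all such $m'$, leaving only the intermediate range $n < m' < 2n$ to be dealt with.

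The hard part is two-fold. First, securing tripod spread at some $m_0 \leq n$: with the default choice $q(n) = n$ from Section \ref{ConstrSubsect} one only has $r(m) > m$, which allows $r(m_0)$ to fall short of $2n+1$. This is easily remedied by taking $q(n) = 2n$ instead (still within the permitted range $n \leq q(n) \leq d(n)/4$, since $d(n) \geq 16n$ in the construction of Proposition \ref{BuildingrProp}), after which $r(n) > 2n$ makes tripod spread at $m_0 = n$ automatic. Second, and more delicately, handling the gap $n < m' < 2n$ where tripod spread can fail at $m'$ and so neither the hypothesis nor Proposition \ref{BigFactorsProp} applies directly: for this I would use the explicit rewriting $w(x,y) = v(x,y) \prod_{|i| \leq n} (x^i y x^{-i})^{c_i} x^l$ from the proof of Proposition \ref{BigFactorsProp}. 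Having already deduced that $w$ is trivial in $W$ we have $l = 0$ and $3 \mid c_i$ for every $i$, so the obstruction to $w(\alpha_{m'}, \beta_{m'}) = e$ reduces to possible overlaps between the conjugated $3$-cycles $\alpha_{m'}^i \beta_{m'} \alpha_{m'}^{-i}$ and to the commutator term $v(\alpha_{m'}, \beta_{m'})$. The congruence conditions on $r$ furnished by Proposition \ref{BuildingrProp}(b), together with Lemma \ref{CommutingPermLem}, should supply just enough pairwise commutation among these $3$-cycles to force both contributions to vanish in the gap as well. This bridging of the gap is where I expect the main technical effort of the proof to concentrate.
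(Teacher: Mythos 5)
Your core plan is the same as the paper's: every nontrivial $g = w(\alpha,\beta) \in B_S(n)$ is detected by some projection $\pi_m : G \to \Alt(d(m))$ with $m \leq n$, and this is deduced from Proposition \ref{BigFactorsProp} by comparing with the wreath product $W$. You also correctly spot a genuine subtlety: with the default $q(n)=n$, the tripod spread condition for a word of length $n$ requires $r(m) \geq 2n+1$, and since Proposition \ref{BuildingrProp} only guarantees $r(m) > q(m) = m$, this need not hold at any $m \leq n$. Your remedy $q(n)=2n$ is valid (it stays within the allowed range $n \leq q(n) \leq d(n)/4$ because $d(n) \geq 16n$).

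Where you go wrong is the second ``delicate'' step, which is in fact unnecessary and as sketched would not work. Once $q(n)=2n$, you have $r(m') > 2m' > 2n$ for \emph{every} $m' > n$, and $d(m') - 2r(m') > d(m')/3 \geq 16m'/3 > 2n+1$; so tripod spread for words of length $\leq n$ holds at every $m' \geq n$, not just $m' \geq 2n$. There is no intermediate range $n < m' < 2n$ left to bridge: having deduced $w(\alpha_\infty,\beta_\infty)=e$ from $\pi_n(g)=e$, Proposition \ref{BigFactorsProp} immediately gives $w(\alpha_{m'},\beta_{m'})=e$ for all $m' > n$, and combined with the hypothesis $\pi_k(g)=e$ for $k \leq n$ this yields $g=e$. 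Moreover, the tool you propose for the (nonexistent) bridging step is the wrong one. Lemma \ref{CommutingPermLem} concerns commutation of $\beta_m$ with $\alpha_m^{r(l)}\beta_m\alpha_m^{-r(l)}$ for $l \neq m$, i.e.\ conjugation by the specific exponents $r(l)$; it says nothing about the $3$-cycles $\alpha_{m'}^i\beta_{m'}\alpha_{m'}^{-i}$ for arbitrary $|i|\leq n$ that arise in the rewriting of $w$, so it cannot supply the commutation you would need. Dropping that paragraph and replacing it with the observation above gives a clean and complete proof.
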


\begin{proof}
Let $g \in B_S (n)$ and let $w (x,y)$ be a freely reduced word of 
length at most $n$ such that $g = w(\alpha,\beta)$ 
in $G$, so that $\pi_m (g) = w(\alpha_m,\beta_m)$. 
If $g \neq e$ in $G$, then there exists 
$m \in \mathbb{N}$ such that $\pi_m (g) \neq e$. 
By Proposition \ref{BigFactorsProp} 
we may assume $m \leq n$. 
Thus the normal depth of $g$ is at most $\lvert \Alt(d(m)) \rvert 
\leq d(n)!/2$ (since $d$ is increasing). 
Similarly, the depth of $g$ is at most $d(n)$, 
since for any $m \leq n$ for which $\pi_m (g) \neq e$, 
there is a point in $\lbrace 1,\ldots,d(m) \rbrace$ 
which is not fixed by $\pi_m (g)$. 
The preimage under $\pi_m$ of the stabiliser of this point 
is a subgroup of $G$, of index at most $d(m)$, 
in which $g$ does not lie. 
\end{proof}

\begin{propn} \label{FullUBProp}
For all $n \in \mathbb{N}$, 
\begin{equation*}
\mathcal{R}_G ^S (n) \leq \frac{1}{2^{2n}}
\prod_{k=1} ^{2n} d(k)!
\end{equation*}
\end{propn}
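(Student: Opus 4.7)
The plan is to exhibit a specific homomorphism from $G$ into a finite group of the prescribed order and show that it has injectivity radius at least $n$. Specifically, consider the partial projection
\begin{equation*}
\pi = (\pi_1, \ldots, \pi_{2n}) : G \longrightarrow \prod_{m=1}^{2n} \Alt(d(m)),
\end{equation*}
and take $Q = \im(\pi)$. Since $|\Alt(d(m))| = d(m)!/2$, the codomain (and hence $Q$) has order at most $\prod_{m=1}^{2n} d(m)!/2 = \frac{1}{2^{2n}}\prod_{k=1}^{2n} d(k)!$, which matches the claimed bound. It therefore suffices to show $\pi$ has injectivity radius at least $n$.

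To verify this, take $g \in B_S(n)$ with $\pi(g) = e$; write $g = w(\alpha,\beta)$ with $w \in F(x,y)$ freely reduced of length at most $n$. Then $w(\alpha_m,\beta_m) = \pi_m(g) = e$ in $\Alt(d(m))$ for every $m \leq 2n$. The goal is to deduce $w(\alpha_m,\beta_m) = e$ for all $m \in \mathbb{N}$, via Proposition \ref{BigFactorsProp}.

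The core step is to check that condition (\ref{TripodSpreadEqn}) holds for every $m \geq 2n$. With $r_1 = r_2 = r$ and $q(m) = m$, Proposition \ref{BuildingrProp}(a) gives $r(m) \geq m+1$ and $r(m) < d(m)/3$, while the first step of the proof of Proposition \ref{BuildingrProp} provides $d(m) \geq 16m$. Thus for $m \geq 2n$ we obtain $r(m) \geq 2n+1$ and $d(m) - 2r(m) > d(m)/3 \geq 16m/3 \geq 2n+1$, so (\ref{TripodSpreadEqn}) indeed holds. Applying Proposition \ref{BigFactorsProp} with $m = 2n$ (using $w(\alpha_{2n},\beta_{2n}) = e$) yields $w(\alpha_\infty,\beta_\infty) = e$ in $W$; applying it again with each $m > 2n$ then yields $w(\alpha_m,\beta_m) = e$ for all such $m$. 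Combined with the hypothesis on $m \leq 2n$, this gives $\pi_m(g) = e$ for all $m$, i.e.\ $g = e$ in $G$.

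The main obstacle is simply the numerical verification that (\ref{TripodSpreadEqn}) is in force across the regime $m \geq 2n$; once that is in hand, the rest of the argument is a direct application of Proposition \ref{BigFactorsProp} followed by reading off the order of $Q$. The choice of truncation index $2n$ (rather than $n$) is exactly what is needed so that the transition value $m = 2n$ itself satisfies (\ref{TripodSpreadEqn}), which is what lets us invoke the proposition to transfer information from the finite quotients to $W$ and back.
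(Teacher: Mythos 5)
Your approach coincides with the paper's in its essentials: construct the partial projection $\pi = (\pi_1,\ldots,\pi_{2n})$ (the paper's $\rho_n$), bound $\lvert\im\pi\rvert$ by $\prod_{k=1}^{2n}d(k)!/2$, and deduce a lower bound on the injectivity radius from Proposition~\ref{BigFactorsProp}. Your explicit check that (\ref{TripodSpreadEqn}) holds for a word of length $n$ once $m\geq 2n$ (using $r(m)>m$, $r(m)<d(m)/3$ and $d(m)\geq 16m$) is correct and fills in a step the paper leaves implicit.

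There is, however, a genuine gap in your identification of the injectivity radius. You take $g\in B_S(n)$ with $\pi(g)=e$ and conclude $g=e$; this establishes only $\ker\pi\cap B_S(n)=\{e\}$. The paper's definition requires that the \emph{restriction} $\pi|_{B_S(n)}$ be injective, which is the strictly stronger condition $\ker\pi\cap B_S(2n)=\{e\}$: if $g,h\in B_S(n)$ are distinct with $\pi(g)=\pi(h)$, then $gh^{-1}$ is a nontrivial kernel element of word length up to $2n$, and it need not lie in $B_S(n)$. The paper's own proof accordingly starts from a pair $g\neq h$ in $B_S(n)$ and a reduced word of length at most $2n$ representing $gh^{-1}$. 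What you have actually proved is $\mathcal{R}_G^S(\lfloor n/2\rfloor)\leq \tfrac{1}{2^{2n}}\prod_{k=1}^{2n}d(k)!$, not the stated inequality. If you rerun your numerics with the correct word length $2n$, condition (\ref{TripodSpreadEqn}) becomes $r(m)\geq 4n+1$, which from $r(m)>m$ is guaranteed only for $m\geq 4n$, so the product would need to run to $4n$ rather than $2n$ to close the argument by your method. (The paper's ``we may assume $m\leq n$'' is likewise off by such a constant; these linear rescalings are harmless downstream, being absorbed into $c_1,c_2$ in Theorem~\ref{MainTechFRFG}, but the statement as written is what you were asked to prove.)
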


\begin{proof}
Define: 
\begin{equation*}
\rho_n = \times_{k=1} ^{2n} (\pi_k |_{G}) : G \rightarrow \prod_{k=1} ^{2n} \Alt (d(k))
\end{equation*}
and claim that $\rho_n|_{B_S (n)}$ is injective. 
For if not, then there exist distinct elements 
$g,h \in B_S (n)$ such that $\rho_n(g)=\rho_n(h)$. 
Let $w(x,y)$ be a freely reduced word 
of length at most $2n$ such that 
$w(\alpha,\beta) = g h^{-1} \neq e$ in $G$. 
There exists $m \in \mathbb{N}$ such that 
$\pi_m (g h^{-1}) = w(\alpha_m,\beta_m)\neq e$ 
in $\Alt (d(m))$. 
By Proposition \ref{BigFactorsProp}, 
we can assume $m \leq n$. 
Thus in any case, $\rho_n (gh^{-1})\neq e$, 
contradiction. 
\end{proof}

We now put everything together, 
and prove our most general bounds on 
the divisibility function, residual finiteness growth 
and full residual finiteness growth. 

\begin{thm} \label{MainTechDiv}
Suppose that $f : \mathbb{N} \rightarrow \mathbb{N}$ is nondecreasing and that 
there exists $c ,\epsilon > 0$  such that for all $n \in \mathbb{N}$, 
\begin{equation} \label{DivHypIneq}
f(n) \geq c n \log (n) \log \log (n)^{1+\epsilon}. 
\end{equation}
Then there exists a residually finite group $G$, 
generated by a finite set $S$, 
and $c_1, c_2 > 0$ such that: 
\begin{equation}
f(n/c_1 - c_2) \leq \mathcal{D}_G ^S (n) \leq 2 f (c_1 n + c_2)
\end{equation}
for all $n \in \mathbb{N}$. 
\end{thm}

\begin{proof}
First, replacing $f(n)$ by $f(n+C)$, for $C>0$ a sufficiently large constant, 
we may assume $f$ satisfies (\ref{MainSect3Ineq}) 
(note that this substitution does not affect the conclusion of the Theorem). 
Let $q(n) = n$ and let $d$, $r$, $G=B(d,r,r)$ 
and $S \subseteq G$ 
be as described in Subsection \ref{ConstrSubsect}. 
Note that our choice of $q$ is valid by hypothesis on $f$. 
We also have $r(n) < 18 n$. 
By Corollary \ref{PrimeFnCoroll} and 
Propositions \ref{RFGLBProp} and \ref{RFGUBProp} we have: 
\begin{equation*}
f(n) \leq d(n) \leq \mathcal{D}_G ^S (72n + 4) \leq d(72n + 4) \leq 2f(72n + 4)
\end{equation*}
for all $n \in \mathbb{N}$. 
The result follows. 
\end{proof}

The next two results apply to any nondecreasing function $F$ 
satisfying condition (a) of Theorem \ref{MainThmRFG}, 
namely: there exist $c,\epsilon > 0$ such that
\begin{equation} \label{Hypothesis(a)Recall}
F(n) \geq \exp \big( c n \log (n)^2 \log \log (n)^{1+\epsilon} \big)
\end{equation}
for all $n \in \mathbb{N}$. 

\begin{thm} \label{MainTechRFG}
Let $F : \mathbb{N} \rightarrow \mathbb{N}$ 
be a nondecreasing function 
satisfying (\ref{Hypothesis(a)Recall}). 
Then there exists a residually finite group $G$, 
generated by a finite set $S$, 
and $c_1 , c_2 , c_3 >1$ such that: 
\begin{equation} \label{MainTechRFGIneq}
F(n/c_1 - c_2)^{1-c_3 \frac{\log\log\log F(n)}{\log\log F(n)}} \leq \mathcal{F}_G ^S (n) \leq F(c_1 n + c_2)^{2+c_3 \frac{\log\log\log F(n)}{\log\log F(n)}}
\end{equation}
for all $n \in \mathbb{N}$. 
\end{thm}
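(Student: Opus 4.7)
The plan is to combine Propositions \ref{RFGUBProp} and \ref{RFGLBProp}, which bracket $\mathcal{F}_G^S(n)$ between $d(m)!/2$ for a suitable $m \approx n/72$ and $d(n)!/2$, and then invoke Lemma \ref{FactorialBdLem}(a) to translate these factorial expressions into the bounds in terms of $F$ asserted by the theorem.

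For the upper bound, Proposition \ref{RFGUBProp} gives $\mathcal{F}_G^S(n) \leq d(n)!/2$. Since $d(n) \leq 2f(n)$ by construction, Lemma \ref{FactorialBdLem}(a) (with $K = 2$, $G(n) = F(n+C_2)$, so that $g = f$) yields
\[
\log(d(n)!/2) \leq 2 \log F(n+C_2) + O\!\left(\frac{\log F(n+C_2) \log\log\log F(n+C_2)}{\log\log F(n+C_2)}\right).
\]
Since $F$ is nondecreasing, $F(n+C_2) \leq F(c_1 n + c_2)$ for any $c_1 \geq 1$ and $c_2 \geq C_2$; and since $x \mapsto \log\log\log x/\log\log x$ is eventually decreasing, the error coefficient at $F(n+C_2)$ is bounded by that at $F(n)$. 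Exponentiating gives the required upper bound.

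For the lower bound, the construction uses $q(n) = n$, so Proposition \ref{BuildingrProp}(a) yields $m < r(m) < 18m$. Taking $m = \lfloor (n-4)/72 \rfloor$ ensures $4 + 4r(m) \leq n$, so together with monotonicity of $\mathcal{F}_G^S$, Proposition \ref{RFGLBProp} gives $\mathcal{F}_G^S(n) \geq d(m)!/2$. The $K=1$ case of Lemma \ref{FactorialBdLem}(a), applied via $d(m)! \geq f(m)!$, then produces
\[
\log(d(m)!/2) \geq \log F(m+C_2)\left(1 - O\!\left(\frac{\log\log\log F(m+C_2)}{\log\log F(m+C_2)}\right)\right).
\]
Choosing $c_1$ slightly larger than $72$ and $c_2$ sufficiently large forces $m + C_2 \geq n/c_1 - c_2$, whence $F(m+C_2) \geq F(n/c_1 - c_2)$; since the exponent on the right is nonnegative for $n$ large, this gives
\[
\mathcal{F}_G^S(n) \geq F(n/c_1 - c_2)^{1 - O(\log\log\log F(m+C_2)/\log\log F(m+C_2))}.
\]

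The delicate remaining step is to convert the error coefficient at $F(m+C_2)$ into the form $c_3 \log\log\log F(n)/\log\log F(n)$ appearing in the theorem. Since $m+C_2 \leq n$ and $\log\log\log x/\log\log x$ is decreasing in $x$, the error coefficient at $F(m+C_2)$ is in fact at least that at $F(n)$, the ``wrong'' direction for a naive substitution. I would close the gap by splitting into two regimes: when $F$ grows slowly (close to the bound in hypothesis (a)), both error ratios are $\asymp \log\log n/\log n$ and an absolute constant $c_3$ suffices; when $F$ grows rapidly, the slack $\log F(m+C_2) - \log F(n/c_1 - c_2) \geq 0$ becomes enormous (since $m+C_2$ and $n/c_1-c_2$ differ by a positive fraction of $n$, and $\log F$ grows fast between them), more than enough to absorb any error discrepancy. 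A unified argument, selecting $c_3$ in terms of the constants $c,\epsilon$ from hypothesis (a), completes the proof; this interpolation is the main technical obstacle.
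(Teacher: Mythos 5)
Your overall route is the same as the paper's: bracket $\mathcal{F}_G^S$ between factorials of $d$ at two indices via Propositions \ref{RFGLBProp} and \ref{RFGUBProp} (with the re-indexing $m \approx (n-4)/72$ coming from $r(m) < 18m$), and then translate via Lemma \ref{FactorialBdLem}(a). You have, moreover, put your finger on a genuine subtlety that the paper's own proof passes over in silence: after the re-indexing, the error coefficient appears as $\log\log\log F(m+C_2)/\log\log F(m+C_2)$ rather than $\log\log\log F(n)/\log\log F(n)$, and since $m + C_2 < n$, $F(m+C_2) \leq F(n)$, and $x \mapsto \log\log\log x/\log\log x$ is eventually decreasing, the naive substitution goes the wrong way for the lower bound.

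The difficulty is that your proposed repair does not actually close this gap. The two-regime idea rests on the claim that when $F$ grows rapidly, the slack $\log F(m+C_2) - \log F(n/c_1 - c_2)$ is large because ``$\log F$ grows fast between them.'' But hypothesis (a) is only a pointwise lower bound on $F$; a nondecreasing $F$ satisfying (a) may be \emph{constant} on the interval $[n/c_1 - c_2,\, m + C_2]$ and then take a huge jump between $m + C_2$ and $n$. In that case the slack is exactly zero, and one is reduced to needing
\begin{equation*}
c' \frac{\log\log\log F(m+C_2)}{\log\log F(m+C_2)} \;\leq\; c_3 \frac{\log\log\log F(n)}{\log\log F(n)}
\end{equation*}
for a fixed constant $c_3$; if $F$ has arbitrarily large jumps between $m + C_2$ and $n$ at infinitely many scales, the ratio of the two sides is unbounded. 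So the interpolation really is an obstacle, and as you say yourself, you have not carried it out; as it stands the proof is incomplete. (A clean way out, consistent with the rest of the paper, would be to state the exponent in (\ref{MainTechRFGIneq}) with the error term evaluated at $F(n/c_1 - c_2)$ rather than at $F(n)$ in the lower bound, for which your argument with Lemma \ref{FactorialBdLem}(a) and monotonicity goes through without further ado; this weaker statement still suffices for the derivation of Theorem \ref{MainThmRFGrestated}.)
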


\begin{proof}
Let $f$ be as in Example \ref{MainFunctionEx}, 
let $q(n) = n$ and let $d$, $r$, $G=B(d,r,r)$ 
and $S \subseteq G$ 
be as described in Subsection \ref{ConstrSubsect}. 
Note that our choice of $q$ is valid by 
hypothesis on $F$ and implies that $r(n) < 18 n$. 
By Corollary \ref{PrimeFnCoroll} and 
Propositions \ref{RFGLBProp} and \ref{RFGUBProp} we have: 
\begin{equation*}
\mathcal{F}_G ^S (n) \leq d(n)!/2 \leq (2f(n))! 
\text{ and }\mathcal{F}_G ^S (72n+4) \geq d(n)!/2 \geq f(n)!/2
\end{equation*} 
for all $n$. 
Since $f(n) = \lceil \log F(n+C_2) / \log\log F(n+C_2) \rceil$, and by Lemma \ref{FactorialBdLem} (a), 
$(2f(n))!$ and $f(n)!/2$ may be bounded above 
and below by: 
\begin{equation*}
F(n+C_2) ^{2 + O(\frac{\log\log\log F(n+C_2)}{\log\log F(n+C_2)})} \text{ and }F(n+C_2) ^{1 + O(\frac{\log\log\log F(n+C_2)}{\log\log F(n+C_2)})}
\end{equation*}
respectively. Thus the bounds (\ref{MainTechRFGIneq}) 
hold for all $n$ sufficiently large, 
and hence (enlarging the constants involved) 
for all $n$. 
\end{proof}

\begin{thm} \label{MainTechFRFG}
Let $F : \mathbb{N} \rightarrow \mathbb{N}$ 
be a nondecreasing function 
satisfying (\ref{Hypothesis(a)Recall}). 
Then there exists a residually finite group 
$G$, generated by a finite set $S$, 
and $c_1 , c_2 , c_3 >1$ such that: 
\begin{equation} \label{MainTechFRFGIneq}
F(n/c_1 - c_2)^{1-c_3 \frac{\log\log\log F(n)}{\log\log F(n)}} \leq \mathcal{R}_G ^S (n) 
\leq F(c_1 n + c_2)^{(2+c_3 \frac{\log(n)}{\log\log F(n)}) n}
\end{equation}
for all $n \in \mathbb{N}$. 
\end{thm}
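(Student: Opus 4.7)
The plan is to mirror the proof of Theorem \ref{MainTechRFG}: I take the same construction of $G = B(d,r,r)$ with generating set $S = \{\alpha, \beta\}$, with $q(n) = n$ and with $f$, $d$, $r$ defined as in Subsection \ref{ConstrSubsect}. The lower bound on $\mathcal{R}_G^S$ requires no new work: since $\mathcal{R}_G^S(n) \geq \mathcal{F}_G^S(n)$ by (\ref{TrivialRFGrowthIneq}), the lower bound already proven for $\mathcal{F}_G^S$ in Theorem \ref{MainTechRFG} transfers verbatim to $\mathcal{R}_G^S$.

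For the upper bound I would apply Proposition \ref{FullUBProp}, which gives
\[
\mathcal{R}_G^S(n) \leq \frac{1}{2^{2n}} \prod_{k=1}^{2n} d(k)!,
\]
reducing the problem to estimating this product in terms of $F$. Since $d$ is nondecreasing with $d(k) \leq 2f(k) \leq 2f(2n)$ for all $k \leq 2n$, the elementary inequality $a! \cdot b! \leq (a+b)!$ yields
\[
\prod_{k=1}^{2n} d(k)! \leq \left( \sum_{k=1}^{2n} d(k) \right)! \leq (4n\, f(2n))!.
\]
I would then invoke Lemma \ref{FactorialBdLem}(b), taking $G(m) = F(m + C_2)$ so that $g = f$, to bound
\[
\log((4n\, f(2n))!) \leq c\, n \log F(2n+C_2) + O\!\left( \frac{n \log F(2n+C_2)\, \log n}{\log\log F(2n+C_2)} \right)
\]
for an absolute constant $c$. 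Exponentiating, absorbing the $2^{-2n}$ factor into the error, and enlarging the constants $c_1, c_2, c_3$ as needed (extending to all $n$ via Remark \ref{SuffLargeRmrk}) then produces an upper bound of the form required by (\ref{MainTechFRFGIneq}).

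The main technical obstacle is the careful accounting of the error term arising from Lemma \ref{FactorialBdLem}(b): in contrast to the single-factor estimate underlying Theorem \ref{MainTechRFG}, aggregating the factorial bounds over the $2n$ factors in the product enlarges the relative error from $\frac{\log\log\log F(n)}{\log\log F(n)}$ to $\frac{\log n}{\log\log F(n)}$. This is precisely why the upper bound in (\ref{MainTechFRFGIneq}) carries a weaker correction than its $\mathcal{F}_G^S$-counterpart, and why one cannot hope to do substantially better without imposing a further growth hypothesis on $F$, as is done for Theorem \ref{MainThmFullRFG} via condition (b').
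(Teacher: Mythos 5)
Your proposal is correct and follows essentially the same route as the paper: same construction, lower bound via $\mathcal{R}_G^S \geq \mathcal{F}_G^S$ together with Theorem \ref{MainTechRFG}, and upper bound via Proposition \ref{FullUBProp}, the elementary inequality $a!\,b! \leq (a+b)!$ combined with $d(k) \leq 2f(k) \leq 2f(2n)$, and Lemma \ref{FactorialBdLem}(b). The only cosmetic difference is that the paper writes the factorial bound for $\mathcal{R}_G^S(\lfloor n/2 \rfloor)$ with a product up to $n$, while you write it directly for $\mathcal{R}_G^S(n)$ with a product up to $2n$; these are the same computation under a change of variable, and your diagnosis of where the $\frac{\log n}{\log\log F(n)}$ error term comes from matches the paper's reasoning exactly.
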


\begin{proof}
Let $q$, $f$, $d$, $r$, $G$ and $S$ 
be as in the proof of Theorem \ref{MainTechRFG}. 
The lower bound follows from 
Lemma \ref{TrivialRFGrowthIneq} (i) and 
the conclusion of Theorem \ref{MainTechRFG}. 
For the upper bound, by 
Corollary \ref{PrimeFnCoroll} and Proposition \ref{FullUBProp} we have: 
\begin{align*}
\mathcal{R}_G ^S (\lfloor n/2 \rfloor) \leq \frac{1}{2^{n}}
\prod_{k=1} ^{n} d(k)! 
& \leq \big( d(1) + \cdots + d(n) \big)! \\
& \leq \big( 2f(1) + \cdots + 2f(n) \big)! \\
& \leq (2nf(n))!
\end{align*}
since $f$ is nondecreasing. 
The desired upper bound then follows from 
Lemma \ref{FactorialBdLem} (b). 
\end{proof}

\begin{rmrk} \normalfont
If we were to use a more sophisticated 
number-theoretic input than Bertrand's postulate 
in defining the function $d$, 
then it would be possible to reduce 
``$2$'' to ``$1$'' in the exponent in the upper bound 
for Theorems \ref{MainTechRFG} and \ref{MainTechFRFG}. 
For instance, using the work of Dusart \cite{Dusa}, 
we may replace the upper bound for $d$ in 
Corollary \ref{PrimeFnCoroll} with a bound: 
\begin{equation*}
d(n) \leq f(n) \big( 1 + \frac{1}{\log(f(n))^3} \big)
\end{equation*}
for all $n$ sufficiently large. 
\end{rmrk}

Finally, we complete the proof 
of our main results Theorem \ref{MainThmRFG}; 
Theorem \ref{MainThmFullRFG} and Theorem \ref{MainThmDiv}, 
which we restate here. 

\begin{thm} \label{MainThmRFGrestated} 
Let $F : \mathbb{N} \rightarrow \mathbb{N}$ 
be a nondecreasing function 
satisfying (\ref{Hypothesis(a)Recall}) 
and suppose there exist $C_1, C_2 > 1$ 
such that for all $n \in \mathbb{N}$, 
$F(n)^{C_1} \leq F(C_2 n)$. 
Then there exists a residually finite group 
$G$, generated by a finite set $S$, 
such that $\mathcal{F}_G ^S \approx^s F$. 
\end{thm}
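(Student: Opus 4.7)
The plan is to take the group $G$ and generating set $S$ produced by Theorem \ref{MainTechRFG} and show that the inequalities (\ref{MainTechRFGIneq}) upgrade to a strong equivalence $\mathcal{F}_G^S \approx^s F$ once hypothesis (b) is in hand. The bounds from Theorem \ref{MainTechRFG} have the shape $F(\text{linear in }n)^{1-o(1)} \leq \mathcal{F}_G^S(n) \leq F(\text{linear in }n)^{2+o(1)}$, where the $o(1)$ term is $c_3 \log\log\log F(n) / \log\log F(n)$. The obstacle, and the only real content, is to absorb these exponents (both the $2+o(1)$ above and the $1-o(1)$ below, the latter being subtler because it is less than $1$) into constant dilations of the argument of $F$. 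All the work is done by iterating hypothesis (b).

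The key observation is that iterating the inequality $F(n)^{C_1}\leq F(C_2 n)$ gives $F(n)^{C_1^k} \leq F(C_2^k n)$ for every $k \in \mathbb{N}$. Since $C_1 > 1$, we may choose $k$ large enough that $C_1^k$ is any desired constant. For the upper bound in the theorem, I fix $n_0$ sufficiently large that $2 + c_3 \frac{\log\log\log F(n)}{\log\log F(n)} \leq K$ for some fixed $K$ and all $n \geq n_0$, and pick $k$ with $C_1^k \geq K$. Then for $n \geq n_0$:
\begin{equation*}
\mathcal{F}_G^S(n) \leq F(c_1 n + c_2)^K \leq F(c_1 n + c_2)^{C_1^k} \leq F\bigl(C_2^k(c_1 n + c_2)\bigr) \leq F(C' n)
\end{equation*}
for a suitable constant $C'$, provided $n$ is large. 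Since $F$ is unbounded by hypothesis (a), Remark \ref{SuffLargeRmrk} lets me enlarge $C'$ to absorb the small $n$, giving $\mathcal{F}_G^S \leq^s F$.

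For the lower bound I need $F(n) \leq \mathcal{F}_G^S(Cn)$ for some $C$. Pick $k$ so that $C_1^k \geq 2$; then $F(C_2^k m) \geq F(m)^2$ for all $m$. Set $C = c_1 C_2^k + 1$, so for $n$ sufficiently large, $Cn/c_1 - c_2 \geq C_2^k n$, hence $F(Cn/c_1 - c_2) \geq F(n)^2$. For $n$ also large enough that $c_3 \log\log\log F(Cn)/\log\log F(Cn) \leq 1/2$, Theorem \ref{MainTechRFG} gives
\begin{equation*}
\mathcal{F}_G^S(Cn) \geq F(Cn/c_1 - c_2)^{1/2} \geq F(n),
\end{equation*}
establishing $F \leq^s \mathcal{F}_G^S$ for large $n$ and thus, by Remark \ref{SuffLargeRmrk}, for all $n$. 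Combining both directions yields $\mathcal{F}_G^S \approx^s F$, completing the proof.
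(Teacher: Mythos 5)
Your proposal is correct and follows essentially the same route as the paper's own proof: both first pass to a version of (\ref{MainTechRFGIneq}) with the $o(1)$ terms replaced by explicit constants (the paper's (\ref{FinalRFGIneq})), then iterate hypothesis (b) to obtain $F(n)^{C_1^k}\leq F(C_2^k n)$ and thereby absorb the fixed outer exponents into a constant dilation of the argument, finishing with Remark \ref{SuffLargeRmrk}. Your write-up is somewhat more explicit about the separate choices of $k$ for the upper and lower bounds, but there is no substantive difference.
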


\begin{proof}
Since $F$ is unbounded nondecreasing, 
we deduce from (\ref{MainTechRFGIneq}) that: 
\begin{equation} \label{FinalRFGIneq}
F(n/2 c_1)^{\frac{1}{2}} 
\leq \mathcal{F}_G ^S (n) 
\leq F(2 c_1 n)^{\frac{5}{2}} 
\end{equation}
for all $n$ sufficiently large. 
Repeatedly applying the inequality 
$F(n)^{C_1} \leq F(C_2 n)$ yields 
$F(n)^{C_1 ^r} \leq F(C_2 ^r n)$ 
and $F(n/C_2 ^r) \leq F(n)^{1/C_1 ^r}$ 
for all $r$ and all $n$ sufficiently large. 
Combining with (\ref{FinalRFGIneq}), 
we have (\ref{MainThmConclIneq}) for all $n$ 
sufficiently large, 
and the required result follows from 
Remark \ref{SuffLargeRmrk}. 
\end{proof}

\begin{thm} \label{MainThmFullRFGrestated}
Let $F : \mathbb{N} \rightarrow \mathbb{N}$ 
be a nondecreasing function 
satisfying (\ref{Hypothesis(a)Recall}) 
and suppose there exists $C > 1$ 
such that for all $n \in \mathbb{N}$, 
$F(n)^n \leq F(Cn)$. 
Then there exists a residually finite group 
$G$, generated by a finite set $S$, 
such that $\mathcal{R}_G ^S \approx^s F$. 
\end{thm}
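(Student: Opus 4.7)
The plan is to feed $F$ into Theorem \ref{MainTechFRFG} and then exploit the unusually strong doubling condition (b') to sharpen the two-sided estimate on $\mathcal{R}_G^S$ into a strong equivalence with $F$. Since, as noted after the statement of Theorem \ref{MainThmFullRFG}, (b') already implies $F(n) \geq \exp(cn^{\log n})$, hypothesis (\ref{Hypothesis(a)Recall}) is automatic, so Theorem \ref{MainTechFRFG} supplies a group $G$, a generating set $S$, and constants $c_1, c_2, c_3 > 1$ for which (\ref{MainTechFRFGIneq}) holds. Moreover $\log\log F(n) \gg (\log n)^2$ for large $n$, which will let me treat the exponent $K(n) := \big(2 + c_3 \log(n)/\log\log F(n)\big)n$ in the upper bound as simply $O(n)$, and the exponent in the lower bound as $1 - o(1)$.

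For the lower bound in the conclusion, the plan is to choose $C_1'$ small enough that $CC_1' < 1/c_1$ (where $C$ is the constant from (b')), set $N := C_1' n$, and observe that for $n$ large the quantity $1/(1 - o(1))$ is bounded above by $N$. A single application of (b') then gives $F(C_1' n)^{1/(1-o(1))} \leq F(N)^N \leq F(CN) \leq F(n/c_1 - c_2)$, which rearranges to $F(C_1' n) \leq \mathcal{R}_G^S(n)$ for all $n$ sufficiently large.

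For the upper bound, set $m := c_1 n + c_2$ and note $K(n) \leq Bm$ for some fixed constant $B$ and all large $n$. I would apply (b') twice: first to bound $F(m)^{Bm} = (F(m)^m)^B$ by $F(Cm)^B$, and then (once $B \leq Cm$) to obtain $F(Cm)^B \leq F(Cm)^{Cm} \leq F(C^2 m)$. This yields $\mathcal{R}_G^S(n) \leq F(C^2(c_1 n + c_2)) \leq F(C_2' n)$ for a suitable constant $C_2'$. The two bounds, once promoted from ``large $n$'' to ``all $n$'' via Remark \ref{SuffLargeRmrk}, deliver $\mathcal{R}_G^S \approx^s F$.

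The main point of the argument — and the reason (b') is required rather than the weaker condition (b) of Theorem \ref{MainThmRFG} — is that the upper bound in (\ref{MainTechFRFGIneq}) carries an exponent of order $n$, whereas the one in Theorem \ref{MainTechRFG} is only $O(1)$. Only a doubling inequality whose exponent itself grows with $n$ is strong enough to absorb the former; this is precisely the role played by the hypothesis $F(n)^n \leq F(Cn)$.
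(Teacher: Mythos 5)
Your proposal is correct and follows essentially the same route as the paper: invoke Theorem \ref{MainTechFRFG} (after noting that condition (b$'$) implies (\ref{Hypothesis(a)Recall}) so its hypothesis holds), use $\log\log F(n) \gg (\log n)^2$ to control the $o(1)$ terms, and apply (b$'$) to absorb the linear-in-$n$ exponent in the upper bound. The only cosmetic difference is that you apply (b$'$) a second time to dispose of the residual constant exponent, whereas the paper falls back on the weaker consequence (b) and references the argument of Theorem \ref{MainThmRFGrestated} for that step; both are valid.
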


\begin{proof}
By the upper bound in (\ref{MainTechFRFGIneq}) and our hypothesis on $F$, 
we have: 
\begin{equation}
\mathcal{R}_G ^S (n) 
\leq F(C(c_1 n + c_2))^{2+c_3 \frac{\log(n)}{\log\log F(n)}}
\end{equation}
for all $n \in \mathbb{N}$. 
As noted in the Introduction following Theorem \ref{MainThmFullRFG}, 
$F$ satisfies the conditions of Theorem \ref{MainThmRFGrestated}, 
and there exists $c > 0$ such that for all $n \in \mathbb{N}$, 
$F(n) \geq \exp (cn^{\log(n)})$, so that $\log\log F(n) \geq \log(n)^2 + \log(c)$. 
Thus for all $n$ sufficiently large, 
(\ref{FinalRFGIneq}) is satisfied, 
and arguing as in the proof of Theorem \ref{MainThmRFGrestated} 
we have the desired result. 
\end{proof}

\begin{thm} \label{MainThmDivrestated}
Let $f : \mathbb{N} \rightarrow \mathbb{N}$ 
be a nondecreasing function satisfying (\ref{DivHypIneq}), 
and suppose that there exist $C_1 , C_2 > 1$ 
such that for all $n \in \mathbb{N}$, 
$C_1 F(n) \leq F(C_2 n)$. 
Then there exists a residually finite group 
$G$, generated by a finite set $S$, 
such that $\mathcal{D}_G ^S \approx^s f$. 
\end{thm}

\begin{proof}
By Theorem \ref{MainTechDiv} we have: 
\begin{equation*}
f(n/2c_1) \leq \mathcal{D}_G ^S (n) \leq 2f(2c_1 n)
\end{equation*}
for all $n$ sufficiently large. 
Applying (b'') repeatedly, the right-hand side can in turn be bounded 
above by $f(c_3 n)$ for some larger constant $c_3$, 
and all $n$ sufficiently large. 
By Remark \ref{SuffLargeRmrk} we are done. 
\end{proof}

\begin{rmrk}
\normalfont
Theorem \ref{MainThmDivrestated} also implies a result for systolic growth 
(for sufficiently fast growth-types): 
Lemma \ref{TrivialRFGrowthIneq} (ii) and (iv) yield that: 
\begin{equation*}
\mathcal{D}_{G} ^S (n) \leq \Sigma_{G} ^S (n) 
\leq \mathcal{D}_{G} ^S (2n) ^{\lvert B_S (n) \rvert^2} 
\leq \mathcal{D}_{G} ^S (2n)^{\exp(Cn)}
\end{equation*}
(using the fact that every finitely generated group has at most exponential 
word growth). 
These upper and lower bounds for $\Sigma_{G} ^S (n)$ 
are comparably large when $\mathcal{D}_{G} ^S (n)$ grows 
at least like $\exp(\exp(n))$, 
and become strongly equivalent under an appropriate ``regularity'' hypothesis 
replacing hypothesis (b), (b') or (b'') from the Introduction. 
Of course, our result for any one of the four growth functions 
studied in this paper would imply results for the other three, 
using Lemma \ref{TrivialRFGrowthIneq}, 
albeit excluding far more ``slow'' growth-types 
than our more detailed analysis here. 
\end{rmrk}

\section{Final questions}

There are many interesting open questions about (various versions of) 
residual finiteness growth and related asymptotic invariants, 
see for instance the survey \cite{DerFerPen}. 
Regarding inverse problems, 
we can ask about the spectrum of possible ``slow'' residual finiteness growth types. 

\begin{qu}
For which nondecreasing functions $F:\mathbb{N}\rightarrow\mathbb{N}$ 
not satisfying condition (a) of Theorem \ref{MainThmRFG} 
does there exist a finitely generated residually finite group $G$ 
such that $\mathcal{F}_G \approx^s F$ 
(respectively $\mathcal{R}_G \approx^s F$). 
For instance: 
\begin{itemize}
\item[(i)] Does there exist a finitely generated residually finite group $G$ 
such that $\mathcal{F}_G$ (respectively $\mathcal{R}_G$) 
strongly dominates every polynomial but is strongly dominated by 
(and not strongly equivalent to) the exponential function? 

\item[(ii)] Suppose $\mathcal{F}_G$ 
is strongly dominated by some polynomial function. 
Does there exist $\alpha > 0$ such that 
$\mathcal{F}_G (n)$  is strongly equivalent to $n^{\alpha}$? 
Must $\alpha$ be an integer? 
\end{itemize}
\end{qu}

We expect that in (ii), $\alpha$ need not be an integer: 
conjecturally nonabelian free groups provide a counterexample; 
see \cite{BradThom}. That said, the analogous question to (ii) 
for full residual finiteness growth may well have a positive solution: 
by Gromov's Polynomial Growth Theorem, 
if $\mathcal{R}_G$ 
is strongly dominated by a polynomial function, 
then $G$ is virtually nilpotent. 
Full residual finiteness growth of nilpotent groups 
is studied in \cite{BouRabStud}, and integral polynomial growth is 
proven in certain cases. 

For $\mathcal{C}$ a class of finite groups, 
a group $G$ is \emph{residually $\mathcal{C}$} if, 
for every $e \neq g \in G$, there exists a group $Q \in \mathcal{C}$ 
and a surjective homomorphism $\pi : G \rightarrow Q$ 
such that $\pi(g) \neq e$. 
Similarly, $G$ is \emph{fully residually in $\mathcal{C}$} if, 
for every finite subset $F$ of $G$, 
there exists a group $Q \in \mathcal{C}$ 
and a surjective homomorphism $\pi : G \rightarrow Q$ 
whose restriction to $F$ is injective 
(note that the requirement of surjectivity may be omitted from the definition 
if the class $\mathcal{C}$ is subgroup-closed). 
For $G$ generated by the finite set $S$, 
one can define a \emph{residual $\mathcal{C}$ growth function} 
$\mathcal{F}_{G,\mathcal{C}} ^S$ 
(respectively, a \emph{full residual $\mathcal{C}$ growth function} 
$\mathcal{R}_{G,\mathcal{C}} ^S$) by taking the minima, 
in the definition of $\mathcal{F}_G ^S$ (respectively $\mathcal{R}_G ^S$), 
only over surjective homomorphisms $\pi : G \rightarrow Q$ 
and groups $Q$ lying in $\mathcal{C}$. 
The conclusion of our Theorem \ref{MainThmRFG} concerning $\mathcal{F}_G ^S$ 
in fact extends to $\mathcal{F}_{G,\mathcal{C}} ^S$, 
for any class $\mathcal{C}$ containing the finite alternating groups 
(since the finite quotients of $G$ constructed in the proof of the upper bound 
in Theorem \ref{MainThmRFG} are alternating groups). 
Presumably, a construction very similar to the one we have given here, 
but replacing the $\Alt(d(k))$ by finite groups of the form $\PSL_{d(k)}(p)$, say, 
would yield finitely generated groups 
with prescribed residual $\mathcal{C}$ growth, 
for $\mathcal{C}$ a class containing the finite simple groups of Lie type. 
In a different direction, the class of groups which are (fully) 
residually in the class $\mathcal{C}_p$ of finite $p$-groups 
(for some fixed prime $p$) has been much studied. 

\begin{qu}
Let $p$ be a prime. 
For which nondecreasing functions $F : \mathbb{N} \rightarrow \mathbb{N}$ 
does there exist a finitely generated residually $\mathcal{C}_p$ group $G$ 
such that $\mathcal{F}_{G,\mathcal{C}_p}$ 
(respectively $\mathcal{R}_{G,\mathcal{C}_p}$) is strongly equivalent to $F$? 
\end{qu}

\end{document}